\newcommand{\UP}[2]{\makebox[0pt]{\smash{\raisebox{1.9em}{$\phantom{#2}#1$}}}#2}
\newcommand{\LF}[1]{\makebox[0pt]{$#1$\hspace{6.0em}}}
\theoremstyle{definition}
\newtheorem{lemma}{Lemma}
\newtheorem{example}{Example}
\newtheorem{theorem}{Theorem}
\newtheorem{corollary}{Corollary}
\title{Row-column factorial designs with multiple levels}
\author{Fahim Rahim and Nicholas J. Cavenagh}
\affil{Department of Mathematics and Statistics, \\
University of Waikato, Private Bag 3105, \\
Waikato Mail Centre 3240, New Zealand}
\begin{document}
	
\maketitle

\begin{abstract}
An  {\em $m\times n$ row-column factorial design} is an arrangement of the elements of a factorial design into a rectangular array. 
Such an array is used in experimental design, where the rows and columns can act as blocking factors. 
If for each row/column and vector position, each element has the same regularity,  
then all main effects can be estimated without confounding by the row and column blocking factors.  

Formally, for any integer $q$,  let $[q]=\{0,1,\dots ,q-1\}$. 
The $q^k$ (full) factorial design with replication $\alpha$ is the multi-set consisting of $\alpha$ occurrences of each element of $[q]^k$; we denote this by $\alpha\times [q]^k$.
A {\em regular $m\times n$ row-column factorial design} is an arrangement of the the elements of $\alpha \times [q]^k$ into an $m\times n$ array (which we say is of {\em type} $I_k(m,n;q)$)   
such that for each row (column) and fixed vector position $i\in [q]$, each element of $[q]$ occurs $n/q$ times (respectively, $m/q$ times).  
Let $m\leq n$. 
We show that an array of type $I_k(m,n;q)$ exists if and only if (a) $q|m$ and $q|n$; (b) $q^k|mn$; (c) $(k,q,m,n)\neq (2,6,6,6)$ and (d) if $(k,q,m)=(2,2,2)$ then $4$ divides $n$. 
  This extends the work of Godolphin (2019), who showed the above is true for the case $q=2$ when $m$ and $n$ are powers of $2$.  
  
  In the case $k=2$, the above implies necessary and sufficient conditions for the existence of a pair of mutually orthogonal frequency rectangles (or $F$-rectangles) whenever each symbol occurs the same number of times in a given row or column.  
\end{abstract}

{\bf Keywords}: Row-column factorial design, blocking factor, double confounding, frequency square {\em or} $F$-square, frequency rectangle {\em or} $F$-rectangle, MOFS.  
	
\section{Introduction}
For any integer $q$,  let $[q]=\{0,1,\dots ,q-1\}$.
Consider the following example of an experimental design from \cite{doi:10.1080/03610926.2015.1122062}. Suppose we wish to study the effects of four drugs on calves, where there are four breeds, four age groups and two levels for dosage for each type of drug. 
We could conduct 16 experiments based on the following row-column factorial design: 
 \begin{table}[H]
		\begin{center}
			\renewcommand{\arraystretch}{1.2}
			\begin{tabular}{cccccccccccc}
				1111&0100&0010&1001 \\
				0001&1010&1100&0111 \\
				1000&0011&0101&1110 \\
				0110&1101&1011&0000 \\
	
			\end{tabular}
		\caption{A regular row-column factorial design of type  $I_4(4,4;2)$.}
		\end{center} 
	\end{table}
	Here the rows and columns correspond to age groups and  breeds, respectively, of calves; with the binary vector in a cell indicating the dosage of each of the four drugs as one of two {\em levels}. 
	
In the above the $16$ vectors from $[2]^4$ are arranged in a  $4\times 4$ array, in such a way that for each row (column) and $i\in [4]$, the entries $0$ and $1$ each appear twice in 
position $i$ of a vector in that row (respectively, column). These properties of regularity in the experimental design allows unbiased estimation of the effects of each drug, together with the effects of breed and age group (which can be thought of as {\em blocking factors}), without any confounding between these main effects. 

 Formally,  the $q^k$ (full) factorial design with replication $\alpha$ is the multi-set consisting of $\alpha$ occurrences of each element of $[q]^k$; we denote this by $\alpha\times [q]^k$. An $m\times n$ {\em row-column factorial design} $q^k$ is any arrangement of the elements of $\alpha \times q^k$ into an $m\times n$ array. Necessarily, $q^k$ must divide $mn$. Without loss of generality, we always assume $m\leq n$.  
 We call such a design {\em regular} if for each row (column) and $i\in [q]$, each element of $[q]$ occurs $n/q$ times (respectively, $m/q$ times). Furthermore we denote the type of such an array to be $I_k(m,n;q)$, where regularity is always assumed to hold. 
Observe that regularity implies that $q$ divides both $m$ and $n$.   
The above example is thus a regular $4\times 4$ row-column factorial design $2^4$, or equivalently an array of  type $I_4(4,4;2)$. 
Note that an array of type $I_2(n,n;n)$ is equivalent to 
a pair of orthogonal Latin squares of order $n$.

%Let $T$ be a subset of $[k]$ where $T=\{i_1,i_2,\dots ,i_t\}$ and $i_1<i_2<\dots <i_t$. 
%Given ${\bf v}=(v_1,v_2,\dots ,v_k)\in [q]^k$, we define ${\bf v}|_T$ to be the vector $(v_{i_1},v_{i_2},\dots ,v_{i_t})$.     An $m\times n$ {\em row-column factorial design} $q^k$ of {\em strength} $t$ is an arrangement of the the elements of $\alpha \times q^k$ into an $m\times n$ array (which we call $I_k(m,n;q)$)   
%such that for each row (column) and for each subset $T$ of $Q$ of size $t$, 
%the set $\{{\bf v}_T: {\bf v}$ is in the row (respectively, column) $\}$  is a $[q]^t$ full-factorial design with replication $n/q^t$ (respectively, $m/q^t$).

We first review the impact of row-column factorial designs within experimental design literature.  
A {\em blocking factor} can be thought of as a partition of the blocks of a design, typically an equipartition (all subsets in the partition have equal size) with further properties of regularity to minimize estimation bias within the design structure. 
 Blocking factors for factorial designs have been well-studied (\cite{bailey1977patterns}, \cite{bailey1985factorial}, \cite{dean1980unified}, \cite{dean1992multidimensional}, \cite{kobilinsky1985confounding}). However, as mentioned in \cite{godolphin2019construction}, having two forms of blocking for a factorial design is less well-studied. 
 
Within experimental design a {\em row-column} design can refer to a variety of combinatorial designs, all with the property of being arranged in a rectangular array, where the rows and columns are typically (but not always) blocking factors. This is sometimes referred to as {\em double confounding} \cite{godolphin2019construction}.  To ensure that certain effects can be estimated without bias being introduced by the design, regularity conditions are imposed. For example, in a Latin square each symbol occurs once per row and once per column. 
While some examples of row-column factorial designs are given in \cite{yates1978design}, the earliest known example in the literature of a regular row-column factorial design is given in  \cite{rao1946confounded}, where an array of type $I_5(8,8;2)$ is featured. 
 In practice non-regular row-column factorial designs are also sometimes of use. In \cite{williams1996row}, a non-regular row-column factorial design is given which was used by the CSIRO Division of Forestry for a glasshouse experiment. Here the physical distance to the edge of the glasshouse is an important factor.

A {\em quasi-Latin square} is an $n\times n$ array such that for some $k>n$ which divides $n^2$, each entry from $[k]$ occurs $n^2/k$ times in the array, with no entry occurring more than once per row or column.  
Some of the literature on quasi-Latin squares features row-column factorial designs \cite{brien2012quasi}. Here if we consider the vectors as the entries, a row-column factorial design can be thought of as a quasi-Latin square if no vector occurs more than once in a row or column (necessarily, $m,n<2^k$).  
John and Lewis \cite{john1983factorial} describe a technique to cyclically generate some row-column factorial designs.  Examples and methods to construct row-column factorial designs are also given in \cite{choi2008confounded}, \cite{dash2013row} and \cite{cheng2013templates}. Wang \cite{wang2017orthogonal} constructs  $I_k(2^M,2^N;2)$ whenever $k=M+N$. A variation of row-column factorial designs is considered by \cite{doi:10.1080/03610926.2015.1122062}: a {\em generalized confounded row-column design} can be thought of as a factorial design arranged into a rectangular array where each cell contains a constant number of vectors. 
 
 Row-column factorial designs with two levels (that is, $q=2$) are studied in \cite{godolphin2019construction}. As well as the result in Theorem \ref{godolph} below, designs are also constructed to estimate paired interactions without confounding by row and column blocking factors.  

\begin{theorem} (\cite{godolphin2019construction}) Let $1\leq M\leq N$. A regular $2^M\times 2^N$ row-column factorial design $2^k$ exists if and only if $k\leq M+N$ and $(k,M,N)\neq (2,1,1)$. 
\label{godolph}
\end{theorem}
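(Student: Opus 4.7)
The plan is to recast the existence question as linear algebra over $\mathbb{F}_2$. Since $m=2^M$ and $n=2^N$, identify the rows with $\mathbb{F}_2^M$ and the columns with $\mathbb{F}_2^N$, and place in cell $(a,b)$ the vector $G_R a + G_C b \in \mathbb{F}_2^k$, where $[G_R \mid G_C]$ is a $k\times(M+N)$ matrix over $\mathbb{F}_2$. Necessity is quick: the total cell count forces $2^k \mid 2^{M+N}$, giving $k \le M+N$. The case $(k,M,N)=(2,1,1)$ would require a $2\times 2$ arrangement of all four elements of $[2]^2$ that is row- and column-regular; reading each vector coordinate as a $2\times 2$ Latin square, this is exactly a pair of MOLS of order $2$, which does not exist. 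Note this necessity argument applies to any design, not merely linear ones.

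For sufficiency I would first prove two equivalences. The array realises the multiset $2^{M+N-k}\times[2]^k$ if and only if $[G_R \mid G_C]$ has rank $k$; and it is regular in the $i$-th row-coordinate (respectively column-coordinate) if and only if row $i$ of $G_C$ (respectively of $G_R$) is non-zero. Both reduce to the elementary fact that a non-zero $\mathbb{F}_2$-linear functional on $\mathbb{F}_2^d$ takes each value equally often. Existence of a linear design thus reduces to selecting $k$ linearly independent vectors from
\[ S := (\mathbb{F}_2^M \setminus \{0\}) \times (\mathbb{F}_2^N \setminus \{0\}). \]

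The combinatorial crux is then to show that $\operatorname{span}(S) = \mathbb{F}_2^{M+N}$ whenever $(M,N)\neq(1,1)$; once this is established, a greedy extension picks $k$ linearly independent vectors from $S$ for every $1 \le k \le M+N$. The spanning claim is a routine linear-algebra exercise: for example, when $N \ge 2$, combinations such as $(e_1,e_1)+(e_1,e_j)=(0,e_1+e_j)$, together with vectors of the form $(e_i,e_1)$, readily generate a basis of $\mathbb{F}_2^{M+N}$ from elements of $S$, with one extra vector such as $(e_1,e_1+e_2)$ providing the missing dimension. In the excluded case $(M,N)=(1,1)$ we have $S=\{(1,1)\}$, whose span is only one-dimensional, capping $k$ at $1$ and so matching the excluded triple $(2,1,1)$.

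The main obstacle is really identifying the linear framework as the right lens on the problem; once attention is restricted to linear designs and the regularity conditions are translated into matrix terms, the remainder is standard $\mathbb{F}_2$-linear algebra, and the sole exceptional triple is pinned down automatically by the dimension of $\operatorname{span}(S)$.
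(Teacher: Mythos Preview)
Your proposal is correct and mirrors the paper's own machinery: Lemma~\ref{lem:FR.Polynomilas.existence.} and Theorem~\ref{thm:SOFRS.primepowers} (specialised to $q=2$) build the design linearly over $\mathbb{F}_2$ and reduce existence to finding $M+N$ linearly independent coefficient vectors whose first $M$ and last $N$ coordinates are both non-zero---precisely your set $S$. The only cosmetic difference is that the paper exhibits such vectors explicitly via row operations on the identity matrix, whereas you argue $\operatorname{span}(S)=\mathbb{F}_2^{M+N}$ and extract independent vectors greedily.
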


Our main result is generalizing the above to arrays of arbitrary dimensions and number of levels (that is, arbitrary $m$, $n$,  $q$ and $k$). 

	\begin{theorem} \label{thm:mainresult}
	Let $m\leq n$. 
		There exists a regular $m\times n$ row-column factorial design $q^k$ if and only if $q$ divides $m$, $q$ divides $n$, $q^k$ divides $mn$ 
unless: 
		\begin{enumerate}
			\item[i.] $k=q=m=2 $ and $n \equiv 2 \pmod{4} $.
			\item[ii.] $k=2$ and $ q=m=n=6$.
		\end{enumerate}
	\end{theorem}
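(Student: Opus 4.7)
\emph{Necessity.} The divisibilities $q\mid m$ and $q\mid n$ are immediate from row/column regularity, and $q^k\mid mn$ follows from the replication identity $mn=\alpha q^k$. For exception (ii), $I_2(6,6;6)$ is precisely a pair of mutually orthogonal Latin squares of order $6$, ruled out by Tarry's theorem. For exception (i), observe that in any $I_2(2,n;2)$ the two entries of each column are coordinate-wise complementary (since $m/q=1$), so every column is of type $\{00,11\}$ or $\{01,10\}$; the replication count forces exactly $n/2$ columns of each type. Letting $a$ count the type-$\{00,11\}$ columns contributing $00$ to row $1$ and $b$ count the type-$\{01,10\}$ columns contributing $01$ to row $1$, row regularity in the two coordinates gives $a+b=n/2$ and $a-b=0$, whence $a=b=n/4$ and $4\mid n$.

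\emph{Sufficiency framework.} My plan is a product-plus-base-cases strategy. Given $A$ of type $I_{k_1}(m_1,n_1;q)$ and $B$ of type $I_{k_2}(m_2,n_2;q)$, I would form the $m_1m_2\times n_1n_2$ array $C$ whose entry at $((i_1,i_2),(j_1,j_2))$ is the concatenation $(A_{i_1,j_1},B_{i_2,j_2})\in[q]^{k_1+k_2}$. Row regularity in the first $k_1$ coordinates comes from $n_2$-fold repetition of $A$'s row regularity, and symmetrically for columns and for the last $k_2$ coordinates; the full-factorial property is the Cartesian product of the two underlying multisets. Thus $C$ is of type $I_{k_1+k_2}(m_1m_2,n_1n_2;q)$. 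I would then assemble a base library---$I_1(m,n;q)$ (any frequency rectangle); $I_k(q,q^{k-1};q)$ (any row-balanced listing of $[q]^k$); and crucially $I_2(m,n;q)$ for every valid triple outside the two exceptions---sufficient for iterated products to cover every admissible quadruple via an appropriate factorization $m=m_1m_2$, $n=n_1n_2$, $k=k_1+k_2$. The case $q=2$, $m,n$ powers of $2$ is already handled by Godolphin's Theorem~\ref{godolph}, and can be fed into the product machinery as further seeds.

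\emph{Main obstacle.} The hardest step will be the $k=2$ base layer: constructing, for each valid $(m,n,q)$ outside the two exceptions, a pair of mutually orthogonal $m\times n$ frequency rectangles on $q$ symbols. This sub-problem cannot be further reduced through the product construction, since a product of two $I_1$-pieces would require $q\mid m_i$ and $q\mid n_i$ for each factor; in particular the case $m=n=q$ collapses to MOLS existence, which is precisely what exception (ii) detects. A secondary obstacle is ensuring that every admissible $(k,q,m,n)$ admits a factorization into product-compatible pieces from the base library; borderline families---for instance $k\ge 3$ with $m=q=2$ and $4\mid n$, or composite $q$ whose prime factors divide $m$ and $n$ in asymmetric ways---will likely require direct cyclic, affine, or small ad hoc constructions to seed the product machinery before the general theorem can be assembled.
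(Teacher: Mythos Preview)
Your necessity argument is correct, and your counting treatment of exception~(i) is in fact tidier than the paper's.

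The sufficiency plan, however, has a structural gap rather than merely a borderline one. Your product $I_{k_1}(m_1,n_1;q)\times I_{k_2}(m_2,n_2;q)\to I_{k_1+k_2}(m_1m_2,n_1n_2;q)$ keeps $q$ fixed; since each factor must itself be regular you need $q\mid m_i$ and $q\mid n_i$ for both $i$, so any nontrivial product forces $q^2\mid m$ and $q^2\mid n$. Hence every target with $q\le m<q^2$ and $q\le n<q^2$ must come straight from your base library or a blowup thereof. Concretely, $I_3(8,8;4)$ and $I_3(12,18;6)$ satisfy all the hypotheses, admit no factorisation of your type, and have no strictly smaller valid $(m',n')$ dividing into them, so blowups do not help either. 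As $q$ ranges this is an infinite family, not an ad~hoc residue. (Separately, ``$I_k(q,q^{k-1};q)$ is any row-balanced listing of $[q]^k$'' understates the difficulty: for composite $q$ the full-factorial constraint on a $q\times q^{k-1}$ array is not automatic.)

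The paper's mechanism is essentially orthogonal to yours. Instead of decomposing $k$, it decomposes $q=\prod_r p_r^{s_r}$ into prime powers and uses a Kronecker product $I_k(m,n;q)\boxtimes I_k(m',n';q')\to I_k(mm',nn';qq')$ that \emph{multiplies} the level counts while keeping $k$ fixed. Each prime-power piece is then built by linear algebra over $\mathbb{F}_{q_r}$, giving $I_{M+N}(q_r^{M},q_r^{N};q_r)$ for arbitrary $M,N$ (Theorem~\ref{thm:SOFRS.primepowers}), with a ``Sudoku'' overlay (Theorem~\ref{thm:SOFRS.i+j+1}) that manufactures one further coordinate whenever the residual factors $p_r^{\alpha_r}$, $p_r^{\beta_r}$ in $m,n$ together contribute another copy of $q_r$. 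Thus $I_3(12,18;6)=I_3(4,2;2)\boxtimes I_3(3,9;3)$, and $I_3(8,8;4)$ comes from the Sudoku step applied to $I_2(4,4;4)$. This prime-power decomposition of $q$ is the idea your plan is missing; without it the ``direct seeds'' you allude to would have to cover essentially the entire $k\ge 3$ problem on their own.
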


We also show that the above exceptions are genuine exceptions; i.e. the above theorem describes necessary and sufficient existence conditions.

%The next lemma follows from the definition of a factorial design. 

%\begin{lemma}
%If $D$ is a regular  $m\times n$ {\em row-column factorial design} $q^k$, then:
%\begin{itemize} 
%\item $q^k\vert mn$;  
%\item $q\vert m$ and $q\vert n$.  
%\end{itemize}
%\end{lemma}   

% Two equipartitions ${\mathcal P}_1$ and ${\mathcal P}_2$ of the same set are said to be {\em orthogonal} if $|P_1\cap P_2|\leq 1$ for each  $P_1\in {\mathcal P}_1$ and $P_2\in {\mathcal P}_2$. 
% If we consider the equipartitions to be acting as two blocking factors, this orthogonality avoids  confounding between the two blocking factors. Given two orthogonal equipartitions ${\mathcal P}_1$ and ${\mathcal P}_2$ of the blocks of a design, we can organize blocks into a rectangular array, where rows correspond to elements of ${\mathcal P}_1$ and columns correspond to elements of   
% ${\mathcal P}_2$. 
 
% Hence when two orthogonal equipartitions are incorporated into an experimental design, the design is often referred to as a {\em row-column} design in the literature, also referred to as {\em double confounding}. This is quite a broad term!!! Really any combinatorial design in a rectangular array!  

We next describe the connection between regular row-column factorial designs and frequency rectangles. 
Given two vectors ${\bf v}=(v_0,v_1,\dots ,v_{k-1})$ and ${\bf w}=(w_0,w_1,\dots ,w_{\ell-1})$, we define ${\bf v}\oplus {\bf w}$ to be the 
concatenation of ${\bf v}$ and ${\bf w}$, that is:   
$${\bf v}\oplus {\bf w}:=(v_0,v_1,\dots ,v_{k-1},w_0,w_1,\dots ,w_{\ell-1}).$$ 
Next, let $A=[a_{ij}]$ and $B=[b_{ij}]$ be matrices of the same dimensions, with each entry of $A$ is a vector of dimension $k$ and each entry of $B$ is a vector of dimension $\ell$. Then  we define $C=A\oplus B$ to be the matrix given by $C=[c_{ij}:=a_{ij}\oplus b_{ij}]$.  
 
Now, an array of type $I_k(m,n;q)$ can be written in the form $F_0\oplus F_1\oplus \dots \oplus F_{k-1}$, where each entry of each $F_i$, $i\in [k]$, has dimension $1$. 
Since regularity is assumed, each element of $[q]$ occurs precisely $n/q$ times per row and $m/q$ times per column, for each of the arrays $F_i$, $i\in [k]$. 
These arrays are thus {\em frequency rectangles}.  

Formally, a {\em frequency rectangle} (sometimes known as an {\em $F$-rectangle}) of type 
%$F(n;q)$ is an $n\times n$ array where symbol $i\in [q]$ occurs $$ times per row and $\lambda_i$ times per column. 
%$F(n;\lambda_1,\lambda_2, \dots ,\lambda_{q})$  is an $n\times n$ array where symbol $i\in [q]$ occurs $\lambda_i$ times per row and %$\lambda_i$ times per column. 
%In the case that $\lambda_1=\lambda_2=\dots =\lambda_q=n/q$, the notation $F(n;n/q)$ is used. 
%Although much less considered in the literature, the idea of a frequency square easily generalizes to a rectangle.
%To this end (and slightly abusing the above notation to reduce the number of parameters by $1$), we define a {\em frequency rectangle} %of type 
$FR(m,n;q)$ is an $m\times n$ array such that each element of $[q]$ occurs $m/q$ times per row and $n/q$ times per column. 
Thus, we may write any array of type $I_k(m,n;q)$ as $F_0\oplus F_1\oplus \dots \oplus F_{k-1}$, where for each $i\in [k]$, $F_i$ is a frequency rectangle of type $FR(m,n;q)$. 
 We note here that frequency rectangles  in the literature (most often {\em frequency squares} or {\em $F$-squares} when $m=n$) may have different row/column frequencies for distinct symbols. In this paper we restrict ourselves to the regular case.    

Two frequency rectangles of type $FR(m,n;q)$ are orthogonal if, when superimposed, each ordered pair from $[q]\times [q]$ occurs exactly $mn/q^2$ times in the array. 
A set of pairwise orthogonal frequency rectangles are called {\em mutually orthogonal rectangles}. 
These have mostly been studied in the case $m=n$, where such structures are called Mutually Orthogonal Frequency Squares or MOFS). 

	The existence problem for pairs of MOFS has been completely classified; the following theorem is a special case of  \cite[p.~67]{laywine1998discrete}. The exceptions are precisely the two orders for which pairs of MOLS which do not exist, as known by Euler. 
	
	\begin{theorem}
		There exists a pair of MOFS of type $ F(n,n; q)$ if and only if $(n,q)\not\in \{(2,1),(6,1)\}$. 
		\label{thm:pair.of.MOFS}
	\end{theorem}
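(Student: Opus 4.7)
The plan is to derive Theorem \ref{thm:pair.of.MOFS} as a direct corollary of Theorem \ref{thm:mainresult}, specialised to $k=2$ and $m=n$. The key conceptual observation is that a pair of orthogonal frequency squares of type $F(n,n;q)$ is, via the concatenation operation $\oplus$ already introduced above, the same object as an array of type $I_2(n,n;q)$.

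More precisely, I would first verify the equivalence between the two objects. Given two frequency squares $F_1, F_2$ of type $F(n,n;q)$, the concatenated array $F_1 \oplus F_2$ is an $n\times n$ array over $[q]^2$, and its rows and columns are automatically regular because each $F_i$ individually is a frequency square. The additional stipulation that $F_1 \oplus F_2$ realises the multiset $\alpha \times [q]^2$ with $\alpha = n^2/q^2$ is then precisely the orthogonality condition: each ordered pair $(a,b) \in [q]\times[q]$ must appear exactly $n^2/q^2$ times under superposition. Conversely, any array of type $I_2(n,n;q)$ decomposes uniquely as $F_1 \oplus F_2$ for a pair of orthogonal frequency squares. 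This bijection transports the existence question for pairs of MOFS of type $F(n,n;q)$ into the existence question for arrays of type $I_2(n,n;q)$.

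It then remains to specialise Theorem \ref{thm:mainresult}. With $k=2$ and $m=n$, the divisibility conditions $q \mid m$, $q \mid n$, $q^2 \mid mn$ collapse to the single requirement $q \mid n$. Exception (i), which demands $k=q=m=2$ and $n\equiv 2 \pmod 4$, can only be triggered under $m=n$ when $n=2$, producing the degenerate case $(n,q)=(2,2)$. Exception (ii) directly yields $(n,q)=(6,6)$. These are precisely the two orders at which pairs of MOLS are known to fail to exist (Euler--Tarry), matching the stated list of exceptions.

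The main work is of course encapsulated in Theorem \ref{thm:mainresult} itself, whose proof is the principal obstacle; once that is in hand, the corollary is essentially a translation exercise. The only subtle point to handle is ensuring that exception (i) of Theorem \ref{thm:mainresult} cannot be triggered by large $n$ with $n\equiv 2 \pmod 4$: the exception explicitly pins $m=2$, so for $m=n>2$ no new obstruction arises, and in particular pairs of MOFS of type $F(n,n;2)$ exist for every even $n\geq 4$.
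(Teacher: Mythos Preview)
Your proposal has a circularity problem. In this paper, Theorem~\ref{thm:pair.of.MOFS} is not proved at all: it is quoted from the literature (Laywine--Mullen) as a known classification, and then \emph{used as an ingredient} in the proof of Theorem~\ref{thm:mainresult}. Concretely, the $k=2$ case (Section~4) invokes Theorem~\ref{thm:pair.of.MOFS} inside Lemma~\ref{lem:twoo} to produce an array of type $I_2(2l_1,2l_1;2)$, and again to obtain $I_2(6,12;6)$ before Lemma~\ref{lem:sixx}. So deriving Theorem~\ref{thm:pair.of.MOFS} as a corollary of Theorem~\ref{thm:mainresult}, as you propose, would make the argument circular. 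The equivalence you spell out between pairs of MOFS and arrays of type $I_2(n,n;q)$ is correct and is exactly what the paper relies on, but the logical flow runs the other way: the cited MOFS result feeds into the main theorem, not the reverse.

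A secondary point: your specialisation of Theorem~\ref{thm:mainresult} yields the exceptional pairs $(n,q)=(2,2)$ and $(6,6)$, whereas the stated theorem lists $\{(2,1),(6,1)\}$. You assert these ``match the stated list of exceptions'', but they do not match literally; the discrepancy is almost certainly a typo in the paper (the second coordinate should record $q$, and for MOLS of orders $2$ and $6$ one has $q=n$, not $q=1$), yet you should flag it rather than silently reconcile it.
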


Hedayat, Raghavarao, et al. \cite{hedayat1975further} showed that if a set of $k$ MOFS of type $FR(n,n;q)$ exists then $k\leq (n-1)^2/(q-1)$. When $k$ meets this upper bound such a set is called {\em complete}. Complete sets of MOFS exist when $q=2$ and if there exists a Hadamard matrix of order $n$ \cite{federer1977existence}; otherwise they are only known to exist when $n$ is a prime power \cite{laywine2001table, li2014some, mavron2000frequency, street1979generalized}. A complete sets of MOFS 
does not exist when $q=2$ and $n\equiv 2\pmod{4}$ \cite{cavenaghwanless2020}.

Note that while an array of type $I_k(m,n;q)$ yields a set of $k$ mutually orthogonal frequency rectangles, the converse is not always true for $k\geq 3$, as seen below. Here we see a set of three MOFS (overlapped) which is not a row-column factorial design. 

\begin{table}[H]
    \begin{center}
        \begin{tabular}{c c c c c c }
				000 & 111 & 000 & 101 & 011 & 110\\
				111 & 000 & 000 & 011 & 110 & 101\\ 
				000 & 000 & 111 & 110 & 101 & 011\\
				101 & 011 & 110 & 010 & 100 & 001\\
				011 & 110 & 101 & 100 & 001 & 010\\
				110 & 101 & 011 & 001 & 010 & 100\\ 
		\end{tabular}
		\caption*{Three MOFS of type $ F(6,6;3) $}
			\label{tbl:3notsofs}
	\end{center}
\end{table}
%[[[ sequences $ 000, 011, 101, 110 $ appear six times, while the sequences $ 001, 111, 010, 100 $ appear only three times.]]]
However, if $k=2$ an $I_k(m,n;q)$ is equivalent to a pair of mutually orthogonal frequency rectangles of type $F(m,n;q)$.

	\begin{theorem} \cite{federer1984pairwise}
	Let $q$ divide $m$ and $n$. 
	If $q\not\in\{2,6\}$ or at least one of $n/q$, $m/q$ is even, 
		 there exists a pair of mutually orthogonal frequency rectangles of type $ F(m,n; q)$ (equivalently, an array of type $I_2(m,n;q)$). 
		\label{thm:pair.of.MOFRS}
	\end{theorem}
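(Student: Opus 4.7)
The plan is to produce a pair of MOFR of type $F(m,n;q)$ by tiling copies of a small base pair. I would first record a tiling lemma: if a pair of MOFR of type $F(m_0,n_0;q)$ exists, then for any $a,b \geq 1$, concatenating $a$ copies vertically and $b$ copies horizontally produces a pair of MOFR of type $F(am_0,bn_0;q)$. The row and column frequencies of each array scale by $b$ and $a$ respectively, and every pair in the superimposition appears $ab$ times as often; the verification is routine.

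For $q \notin \{2,6\}$ (the degenerate case $q=1$ being trivial), a pair of MOLS of order $q$ exists and is by definition a pair of MOFR of type $F(q,q;q)$. Tiling with $a = m/q$, $b = n/q$ yields a pair of MOFR of type $F(m,n;q)$.

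For the remaining case, assume $q \in \{2,6\}$ with $m/q$ even (the case $n/q$ even is symmetric), so that $2q$ divides $m$. I would give an explicit base pair of MOFR of type $F(2q,q;q)$ by the cyclic construction
\[
F_1(i,j) \equiv \lfloor i/2 \rfloor + j \pmod q,\qquad F_2(i,j) \equiv i + j \pmod q,
\]
for $i \in [2q]$ and $j \in [q]$. Each row of either array is a cyclic permutation of $[q]$, and each column contains every symbol exactly twice, because $\lfloor i/2 \rfloor$ and $i \bmod q$ each take every value in $[q]$ twice as $i$ ranges over $[2q]$. For orthogonality, the pair $(r,s)$ appears at cell $(i,j)$ iff $j \equiv r - \lfloor i/2 \rfloor$ and $i - \lfloor i/2 \rfloor \equiv s-r \pmod q$, and the map $i \mapsto i - \lfloor i/2 \rfloor \pmod q$ sending $2k \mapsto k$ and $2k+1 \mapsto k+1$ is $2$-to-$1$ on $[2q]$. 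Hence each pair is hit exactly $2 = (2q) \cdot q / q^2$ times. Tiling this base pair with $a = m/(2q)$, $b = n/q$ delivers the required pair of MOFR of type $F(m,n;q)$.

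The principal obstacle is the $q=6$ subcase, since no pair of MOLS$(6)$ is available. The cyclic construction above sidesteps this entirely by exploiting the extra factor of two in one dimension: a complete mapping of $\mathbb{Z}_q$ (which would arise from trying to directly orthogonalise a single Latin square of order $q$, and in fact fails to exist for even $q$ by Hall--Paige) is replaced by the trivially existing $2$-to-$1$ quotient $[2q] \to [q]$. The same construction simultaneously handles $q=2$ with equal ease, so I would package it as a single uniform lemma covering both exceptional values of $q$.
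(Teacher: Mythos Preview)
The paper does not supply its own proof of this theorem; it is quoted from \cite{federer1984pairwise} and used as a black box, with Section~4 only treating the complementary cases $q\in\{2,6\}$ with both $m/q$ and $n/q$ odd. So there is no in-paper argument to compare against.

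Your argument is correct. The tiling lemma is precisely the special case $q'=1$ of the paper's Kronecker product construction (equivalently Corollary~\ref{cor:blowup}), so that step is on solid ground. The explicit $2q\times q$ base pair is the substantive new ingredient, and your verification goes through: the row and column regularity is immediate, and for orthogonality the key computation is that $i\mapsto i-\lfloor i/2\rfloor\pmod q$ is $2$-to-$1$ on $[2q]$, which you check by writing $2k\mapsto k$ and $2k+1\mapsto k+1$. One small cosmetic point: your remark about Hall--Paige is slightly tangential, since what actually fails for $q\in\{2,6\}$ is the existence of a pair of MOLS of order $q$, not a complete mapping of $\mathbb{Z}_q$ per se (indeed $\mathbb{Z}_2$ and $\mathbb{Z}_6$ lack complete mappings, but so does every $\mathbb{Z}_{2m}$, yet MOLS exist for most even orders). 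The mathematical content of your construction is unaffected; you might simply drop or rephrase that aside.

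Compared with the paper's toolkit, your approach is more elementary and more uniform: the paper, when it later handles the residual $q\in\{2,6\}$ cases in Lemmas~\ref{lem:twoo} and~\ref{lem:sixx}, appeals to Theorem~\ref{thm:pair.of.MOFS} for a square base and then glues on a rectangular piece, and for $q=6$ even exhibits an ad hoc $6\times 18$ array. Your single cyclic $F(2q,q;q)$ base dispatches both exceptional $q$ at once and would in fact also cover the ``even ratio'' half of the residual cases the paper treats separately.
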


In Section 4, we generalize the previous theorem to find necessary and sufficient conditions for the existence of an array of type $I_2(m,n;q)$. We prove the remaining cases of Theorem \ref{thm:mainresult} in Section 5, using the recursive constructions from Section 2 and the finite field constructions from Section 3.  

% We often use the terminology of frequency squares in the proofs in this paper. 

%	\begin{lemma}
%		Let $ a = \max\{i \in \mathbb{N} : q^i \arrowvert mn \} $ then the number of strongly orthogonal frequency rectangles of type $FR(m,n; \ \nicefrac{n}{q}, \nicefrac{m}{q})$ must satisfy $ k \leq a $.
%		\label{lem:upperbound.SOFRS}
%	\end{lemma}
	
%	\begin{definition}
%			We say that a set of $ k $ frequency rectangles of type $FR(m,n; \ \nicefrac{n}{q}, \nicefrac{m}{q})$ is \textit{complete} if $ k = a $, where $ a $ is defined in Lemma \ref{lem:upperbound.SOFRS}.
%	\end{definition}

	\section{Recursive constructions}
	
	In this section we discuss ways in which row-column factorial designs can be built recursively. 
We begin with a straightforward lemma. 

\begin{lemma}
If there exists arrays of type $I_k(m,n;q)$ and $I_k(m',n;q)$ there exists an array of type $I_k(m+m,n;q)$. 
If there exists arrays of type $I_k(m,n;q)$ and $I_k(m,n';q)$ there exists an array of type $I_k(m,n+n';q)$.
\label{lem:glueing}
\end{lemma}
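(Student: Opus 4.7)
The plan is to prove both statements by direct concatenation: stack the two arrays vertically (for the first statement) or place them side by side (for the second statement), then verify that regularity of the rows and columns is preserved.

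For the first statement, let $A$ be an array of type $I_k(m,n;q)$ and $B$ an array of type $I_k(m',n;q)$, and form the $(m+m')\times n$ array $C$ whose first $m$ rows are the rows of $A$ and whose last $m'$ rows are the rows of $B$. First I would check the multi-set count: $A$ contains $mn/q^k$ copies of each vector in $[q]^k$ and $B$ contains $m'n/q^k$ copies, so $C$ contains $(m+m')n/q^k$ copies of each vector, as required. Next, each row of $C$ is a row of $A$ or of $B$; in either case each element of $[q]$ appears $n/q$ times in each vector position, so row regularity holds. Finally, for each column of $C$ and each vector position $i\in [k]$, the number of occurrences of a given symbol of $[q]$ is the sum $m/q+m'/q=(m+m')/q$ of the column counts from $A$ and $B$, which is exactly the column regularity required for $I_k(m+m',n;q)$.

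The second statement is proved analogously by horizontal concatenation, with the roles of rows and columns interchanged: form an $m\times (n+n')$ array whose first $n$ columns come from an array of type $I_k(m,n;q)$ and whose last $n'$ columns come from an array of type $I_k(m,n';q)$, then repeat the same three verifications (multi-set count, row regularity as a sum $n/q+n'/q$, column regularity unchanged).

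There is essentially no obstacle: the lemma is a bookkeeping fact about how the regularity parameters add under concatenation, and I expect the whole proof to consist of these observations stated in one short paragraph. (I would also note in passing the apparent typo ``$I_k(m+m,n;q)$'' in the statement, which should read $I_k(m+m',n;q)$.)
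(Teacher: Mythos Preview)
Your proof is correct and is exactly the argument the paper has in mind: the lemma is stated there without proof as ``a straightforward lemma'' (and its name, \emph{glueing}, signals concatenation), so your vertical/horizontal stacking with the three regularity checks is precisely the intended justification. Your observation about the typo $I_k(m+m,n;q)$ for $I_k(m+m',n;q)$ is also apt.
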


Next we consider a type of Kronecker Product. 	Let $ A=[a_{ij}]$ and $ B=[b_{ij}]$ be two arrays of sizes $ m \times n $ and $ u \times v $, respectively. The \textit{Kronecker product}, $ A \otimes B$, of $ A$ and $B $ is an $ mu \times nv $ array defined by:
	
	\begin{equation*}
	A \otimes B = \begin{pmatrix}
	a_{11}B & \ldots & a_{1n}B  \\
	\vdots  & 		 & \vdots   \\
	a_{m1}B & \ldots & a_{mn}B \\
	\end{pmatrix}
	\end{equation*}
	
	where $ a_{lk} B$ is a $ u \times v $ array with the entry $ (i,j) $ given by $(a_{lk}, b_{ij}) $. 
	
	It is easy to see that if $ F $ and $ F' $ are two frequency rectangles of type $ FR(m,n;q)$ and $ FR(m',n';q') $ respectively, then their Kronecker product $ F \otimes F' $ is a frequency rectangle of type $FR(mm',nn'; qq')$,
 where the entries of $[q]\times [q']$ are mapped to $[qq']$ by some bijection $f$.  
 In this fashion, let $D=F_0\oplus F_1\oplus \dots \oplus F_{k-1}$
	and $D'=F_0'\oplus F_1'\oplus \dots \oplus F_{k-1}'$ be two row-column factorial designs of types $I_k(m,n;q)$ and $I_k(m',n';q')$, respectively, where $F_i$ and 
$F_i'$ are frequency rectangles for each $i\in [k]$. 
Then we define $D\boxtimes D'$ to be the array given by  $(F_0\otimes F_0')\oplus (F_1\otimes F_1')\oplus \dots \oplus (F_{k-1}\otimes F_{k-1}')$.
	
%	The following lemma is analagous to a construction for orthogonal arrays (Theorem III.7.20 from \cite{Colbourn:2006:HCD:1202540}, originally \cite{bushka}). 

	\begin{lemma}
		\sloppy If $D$ and $D'$ are arrays of type $I_k(m,n;q)$ and $I_k(m',n';q')$, respectively, then 
		$D\boxtimes D'$, as defined above, is an array of type $I_k(mm',nn';qq')$. 
	\end{lemma}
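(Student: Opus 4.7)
The plan is to verify the two requirements of an array of type $I_k(mm', nn'; qq')$: first, that the multiset of vector-entries equals $\alpha'' \times [qq']^k$ for some constant $\alpha''$; and second, that regularity holds in every row, column, and coordinate position. Throughout I will fix the bijection $f:[q]\times[q']\to[qq']$ used to define the Kronecker product at each coordinate, and use the fact, already recorded in the paragraph preceding the lemma, that each $F_\ell\otimes F_\ell'$ is individually a frequency rectangle of type $FR(mm', nn'; qq')$.

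First I would write the entry of $D\boxtimes D'$ at block position $\bigl((i,r),(j,s)\bigr)$ explicitly. If $D[i,j]=(a_{ij,0},\ldots,a_{ij,k-1})\in[q]^k$ and $D'[r,s]=(a'_{rs,0},\ldots,a'_{rs,k-1})\in[q']^k$, then the definition of the Kronecker product applied layer by layer gives
\[
(D\boxtimes D')\bigl[(i,r),(j,s)\bigr]=\bigl(f(a_{ij,0},a'_{rs,0}),\;\ldots,\;f(a_{ij,k-1},a'_{rs,k-1})\bigr).
\]
Since $f$ is a bijection, the map sending a pair $(v,v')\in[q]^k\times[q']^k$ to $\bigl(f(v_0,v'_0),\ldots,f(v_{k-1},v'_{k-1})\bigr)\in[qq']^k$ is also a bijection.

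Next I would count multiplicities. For a target vector $w\in[qq']^k$, let $(v,v')\in[q]^k\times[q']^k$ be the unique preimage under the bijection above. Then $(D\boxtimes D')\bigl[(i,r),(j,s)\bigr]=w$ if and only if $D[i,j]=v$ and $D'[r,s]=v'$. Because $D$ is of type $I_k(m,n;q)$, the vector $v$ occurs $mn/q^k$ times in $D$, and similarly $v'$ occurs $m'n'/(q')^k$ times in $D'$. Multiplying, every $w$ occurs exactly $(mm'n n')/(qq')^k$ times in $D\boxtimes D'$, so the multiset of entries is of the form $\alpha''\times[qq']^k$ with $\alpha''=(mm'n n')/(qq')^k$.

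Finally, regularity: fix a coordinate $\ell\in[k]$ and a row $(i,r)$ of $D\boxtimes D'$. An element $u=f(x,y)\in[qq']$ appears at position $\ell$ in cell $(j,s)$ of this row precisely when $F_\ell[i,j]=x$ and $F'_\ell[r,s]=y$. By the regularity of $F_\ell$ there are $n/q$ choices of $j$, and by the regularity of $F'_\ell$ there are $n'/q'$ choices of $s$, giving $nn'/(qq')$ occurrences per row, as required. The column calculation is identical with $m/q$ and $m'/q'$ in place of $n/q$ and $n'/q'$. I do not anticipate a genuine obstacle here; the only thing to watch is that the bijection $f$ is applied coordinate-wise consistently so that the counting argument for multiplicities and the counting argument for regularity both factor cleanly across the two index sets.
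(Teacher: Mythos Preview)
Your proof is correct and follows essentially the same approach as the paper. The paper's proof invokes the observation (made just before the lemma) that each $F_\ell\otimes F_\ell'$ is a frequency rectangle of type $FR(mm',nn';qq')$ to dispose of regularity in one line, then performs exactly your multiplicity count via the bijection $f$; you simply re-derive the regularity check explicitly rather than citing it.
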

	
	\begin{proof}
	It suffices to show that the entries of the cells of $D\boxtimes D'$
	form a factorial design. 
Let $D=F_0\oplus F_1\oplus \dots \oplus F_{k-1}$
	and $D'=F_0'\oplus F_1'\oplus \dots \oplus F_{k-1}'$ as above. 
	
	Consider any 
	 $ (\alpha_0, \alpha_1, \dots \alpha_{k-1})\in [qq']$ in a cell of 
	 $D\boxtimes D'$. 
	 Then for each $i\in [k]$, $\alpha_i = f(a_i,b_i)$, for some $ a_i$ and $ b_i$ belong to the symbol sets of $ F_i $ and $ F_i' $ respectively.
Since $D$ is of type $I_k(m,n;q)$, there are precisely $mn/q^k$ cells containing $a_i$ in $F_i$ for each $i\in [k]$. 
Similarly, there are exactly $m'n'/(q')^k$ cells containing $b_i$ in $F_i'$ for each $i\in [k]$.  	
 From the definition of the Kronecker product,  the sequence $(\alpha_0,  \alpha_1, \dots ,\alpha_{k-1})$ appears exactly $mm'nn'/(qq')^k$ times in $D\boxtimes D'$.
	\end{proof}

\begin{example}
Consider the two arrays $D$ and $D'$ of type $ I_3(4,2;2) $ and $ I_3(3,9;3) $ respectively.

\begin{table}[H]
		\begin{minipage}[h]{0.4\textwidth}
			\centering
			\begin{tabular}{cccc}
				000&011&101&110 \\
				111&100&010&001 \\
			\end{tabular}
			\caption*{$D^T$}
		\end{minipage}
		\begin{minipage}[h]{0.58\textwidth}
			\centering
			\begin{tabular}{ccc ccc ccc}
				000&011&022& 101&112&120& 202&210&221 \\
				111&122&100& 212&220&201& 010&021&002 \\
				222&200&211& 020&001&012& 121&102&110 \\
			\end{tabular}
			\caption*{$D'$}
		\end{minipage}		
	\end{table}

Then by taking the product $D \boxtimes D'$ and using the bijection $(a,b) \mapsto 3a + b $ to transform the symbol set, we get an array of type $I_3(12,18;6)$:
\begin{table}[H]
    \begin{center}
    \renewcommand{\arraystretch}{1.2}
    \resizebox{15.0cm}{!}{%
        \begin{tabular}{ccc ccc ccc ccc ccc ccc}
    000	&011&022&	101&112&120&	202&210&221&	333&344&355&	434&445&453&	535&543&554 \\
    111	&122&100&	212&220&201&	010&021&002&	444&455&433&	545&553&534&	343&354&335 \\
    222	&200&211&	020&001&012&	121&102&110&	555&533&544&	353&334&345&	454&435&443 \\
    033	&044&055&	134&145&153&	235&243&254&	300&311&322&	401&412&420&	502&510&521 \\
    144	&155&133&	245&253&234&	043&054&035&	411&422&400&	512&520&501&	310&321&302 \\
    255	&233&244&	053&034&045&	154&135&143&	522&500&511&	320&301&312&	421&402&410 \\
    303	&314&325&	404&415&423&	505&513&524&	030&041&052&	131&142&150&	232&240&251 \\
    414	&425&403&	515&523&504&	313&324&305&	141&152&130&	242&250&231&	040&051&032 \\ 
    525	&503&514&	323&304&315&	424&405&413&	252&230&241&    050&031&042&	151&132&140 \\
    330	&341&352&	431&442&450&	532&540&551&	003&014&025&	104&115&123&	205&213&224 \\
    441	&452&430&	542&550&531&	340&351&332&	114&125&103&	215&223&204&	013&024&005 \\
    552	&530&541&	350&331&342&	451&432&440&	225&203&214&	023&004&015&	124&105&113 \\
		\end{tabular}}
		\caption*{An array of type $ I_3(12,18;6) $}
		\end{center}
\end{table}
\end{example}	

\begin{corollary}
		\sloppy If there exist $ r $ arrays of types $ I_{k}(m_i, n_i; q_i) $, where $ i\in[r]$, then there exist an array of type $I_k(\prod_{i=0}^{r-1}m_i, \prod_{i=0}^{r-1}n_i  ;\prod_{i=0}^{r-1}q_i) $.
%, where $ k = \min\{k_1,\dots,k_m\} $. 
		\label{cor:kroneckerproduct.of.m.sofs}
	\end{corollary}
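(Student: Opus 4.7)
The plan is a straightforward induction on $r$, with the preceding lemma on $\boxtimes$ doing essentially all the work. The base case $r=1$ is trivial, since the single given array is already of the claimed type (with an empty product interpreted as $1$ in the non-base case, or simply taking $r=2$ as the base handled directly by the lemma).

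For the inductive step, I would assume the statement holds for $r-1$ arrays. Given arrays $D_0, D_1, \dots, D_{r-1}$ of types $I_k(m_i,n_i;q_i)$, I would first apply the inductive hypothesis to $D_0,\dots,D_{r-2}$ to obtain a single array $D^*$ of type
$$I_k\left(\prod_{i=0}^{r-2}m_i,\ \prod_{i=0}^{r-2}n_i;\ \prod_{i=0}^{r-2}q_i\right),$$
and then apply the previous lemma to $D^*$ and $D_{r-1}$ to obtain an array of type $I_k(\prod_{i=0}^{r-1}m_i,\prod_{i=0}^{r-1}n_i;\prod_{i=0}^{r-1}q_i)$, as required.

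The only delicate point is bookkeeping for the symbol set. Each application of the lemma relies on a bijection $f:[q]\times[q']\to[qq']$ used to relabel pairs of entries. Iterating the $\boxtimes$ operation therefore implicitly composes such bijections into a single bijection $[q_0]\times[q_1]\times\cdots\times[q_{r-1}]\to\bigl[\prod_i q_i\bigr]$; different associations of the product correspond to relabeling the final symbol set and do not affect whether the resulting array is of type $I_k$. I do not anticipate any genuine obstacle here — this corollary is essentially just the observation that the binary operation $\boxtimes$ can be iterated, and the lemma guarantees the type is preserved at each step.
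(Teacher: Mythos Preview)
Your proposal is correct and is exactly the argument the paper has in mind: the corollary is stated without proof, as it follows immediately by iterating the preceding lemma on $\boxtimes$, i.e.\ by the induction you describe. Your remark about the symbol-set bijections is a fair clarification but not an issue the paper addresses or needs to address.
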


 Trivially there exists an array of type $I_k(m,n;1)$ for any integers $k,m,n$. The following corollary is then immediate. 
	
	\begin{corollary}
	    \sloppy 
	    If there exists an array of type $I_k(m,n;q)$, 
	    then there exists an array of type $I_k(mm',nn';q)$ for any integers $m',n'\geq 1$. 
	    \label{cor:blowup}
	\end{corollary}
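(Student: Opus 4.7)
The plan is to deduce this as an immediate consequence of the preceding Corollary \ref{cor:kroneckerproduct.of.m.sofs} together with the trivial observation that arrays with $q=1$ always exist. Specifically, when $q=1$ the symbol set $[q]^k$ contains only the zero vector, so the unique $m' \times n'$ array every cell of which contains $(0,0,\dots,0)$ is vacuously an array of type $I_k(m',n';1)$: the regularity condition requires each of the (single) symbol in $[1]$ to occur $n'/1 = n'$ times per row and $m'/1=m'$ times per column in each coordinate frequency rectangle, which is trivially satisfied.

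Given this, I would invoke Corollary \ref{cor:kroneckerproduct.of.m.sofs} with $r=2$, $(m_0,n_0,q_0)=(m,n,q)$ being the given array and $(m_1,n_1,q_1)=(m',n',1)$ being the trivial one just constructed. The corollary then yields an array of type
\[ I_k\bigl(m \cdot m',\; n \cdot n';\; q \cdot 1\bigr) = I_k(mm', nn'; q), \]
which is precisely what is claimed.

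There is no real obstacle here; the work has already been done in the Kronecker construction. The only thing worth being careful about is confirming that a $q=1$ array genuinely satisfies the definition of $I_k(m',n';1)$ (it does, since divisibility conditions $1\mid m'$, $1\mid n'$, $1^k \mid m'n'$ hold for all $m',n' \geq 1$), so that the hypotheses of Corollary \ref{cor:kroneckerproduct.of.m.sofs} are legitimately met.
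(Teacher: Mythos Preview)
Your proposal is correct and matches the paper's own argument exactly: the paper remarks that an array of type $I_k(m,n;1)$ trivially exists for any $k,m,n$, and then states that Corollary~\ref{cor:blowup} is immediate, i.e.\ it follows by applying the Kronecker-product Corollary~\ref{cor:kroneckerproduct.of.m.sofs} with the given $I_k(m,n;q)$ and the trivial $I_k(m',n';1)$.
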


	\section{Prime power constructions}
	
In this section, we construct row-column factorial designs via finite fields of prime power order $q$. It is implicitly understood that field elements are relabelled to elements of $[q]$ as a final step in construction.

	\begin{lemma}
		Let $M, N\geq 1$ and $q\geq 2$, with $(M,N,q)\neq (1,1,2)$. Then there exists a linearly independent set of $M +N $ polynomials:
		\begin{equation*}
		f_r(x_0, ... , x_{M+N-1}) = a_{r,0}x_0+ a_{r,1}x_1+ \cdots + a_{r,M+N-1}x_{M+N-1};\ r\in [M+N] 
		\end{equation*}
		over the field $ \mathbb{F}_q $ which satisfy the following two conditions for each $r\in [M+N]$:
		\begin{enumerate}
			\item[(i)]  $ (a_{r,0}, ... , a_{r,M-1}) \neq (0, ... , 0) $;  
			\item[(ii)]  $ (a_{r,M}, ... , a_{r,M+N-1} ) \neq (0, ... , 0) $.
		\end{enumerate}
		\label{lem:FR.Polynomilas.existence.}
	\end{lemma}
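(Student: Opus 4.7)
The plan is to identify the $M+N$ polynomials $f_r$ with the rows of an $(M+N) \times (M+N)$ matrix $A$ over $\mathbb{F}_q$. Linear independence of the $f_r$ is equivalent to $\det A \neq 0$, and conditions (i) and (ii) require each row of $A$ to have a nonzero entry in the first $M$ columns and also in the last $N$ columns. I will exhibit such a matrix explicitly.

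For every $r \neq M$ I would take
\[
f_r = x_r + x_M \quad (0 \leq r \leq M-1), \qquad f_r = x_0 + x_r \quad (M+1 \leq r \leq M+N-1),
\]
each of which clearly satisfies (i) and (ii). Viewed as vectors $e_i + e_j$ in $\mathbb{F}_q^{M+N}$, these $M+N-1$ rows are the edge-vectors of a spanning tree of the bipartite graph with parts $\{0,\ldots,M-1\}$ and $\{M,\ldots,M+N-1\}$ (vertex $0$ is joined to every vertex of the second part, and vertex $M$ is joined to every vertex of the first part). Since a routine leaf-peeling argument shows that the edge-vectors of a forest are linearly independent over any field, it only remains to select an $f_M$ satisfying (i), (ii) that is not in their span.

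If $q \geq 3$, I would set $f_M = x_0 + \alpha x_M$ for any $\alpha \in \mathbb{F}_q \setminus \{0,1\}$. Expanding $\det A$ successively along columns $1,\ldots,M-1,M+1,\ldots,M+N-1$ (each of which has a unique nonzero entry, equal to $1$) reduces the calculation to the $2 \times 2$ determinant
\[
\det \begin{pmatrix} 1 & 1 \\ 1 & \alpha \end{pmatrix} = \alpha - 1 \neq 0.
\]

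The main obstacle is the case $q = 2$, where the above trick collapses (the only available scalar is $\alpha = 1$, making the $2 \times 2$ determinant zero). I would instead take $f_M = x_0 + x_1 + x_M$ if $M \geq 2$, and $f_M = x_0 + x_M + x_{M+1}$ if $M = 1$ (in which case $N \geq 2$, since $(M,N,q) = (1,1,2)$ is excluded by hypothesis). Independence then follows by a weight-parity argument: each of the $M+N-1$ spanning-tree rows has Hamming weight $2$ and thus lies in the hyperplane $H = \{v \in \mathbb{F}_2^{M+N} : \sum_i v_i \equiv 0 \pmod 2\}$, whereas $f_M$ has Hamming weight $3$, so $f_M \notin H$ and the full system has rank $M+N$. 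The necessity of avoiding $(M,N,q)=(1,1,2)$ is visible here as well: in $\mathbb{F}_2^2$ no vector satisfying (i) and (ii) has Hamming weight $3$, so no analogous supplementary row exists.
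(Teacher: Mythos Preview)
Your proof is correct. Both you and the paper proceed by writing down an explicit invertible $(M+N)\times(M+N)$ matrix over $\mathbb{F}_q$ whose rows all meet conditions (i) and (ii), so the strategies coincide at the top level; the differences lie in the execution. The paper starts from the identity matrix and applies two batches of elementary row operations, splitting into the cases ``characteristic $2$'' versus ``characteristic $\neq 2$'' (the latter allowing a coefficient $2$ in column~$0$), with the base case $M=N=1$, $q>2$ handled separately exactly as you do. Your construction instead fixes $M+N-1$ rows as the edge-vectors $e_i+e_j$ of a spanning tree on $[M+N]$ and then adjoins a single supplementary row, verifying independence via a $2\times 2$ determinant when $q\geq 3$ and via the weight-parity hyperplane when $q=2$. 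The graph-theoretic framing is a pleasant bonus: it makes the independence of the first $M+N-1$ rows transparent over every field simultaneously and isolates precisely where the exclusion $(M,N,q)\neq(1,1,2)$ is needed, whereas the paper's row-reduction hides this behind the case split.
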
	

\begin{proof}
	We split the proof into cases.
	
	\textbf{Case I:} When $ M=N=1 $ and $ q > 2 $.

	In this case we can take the following two polynomials:
	\begin{equation*}
	\begin{aligned}
	&f_0(x_0,x_1) & = \; & x_0 +  x_1& \\
	&f_1(x_0,x_1) & = \; & x_0 + \alpha x_1,& \\
	\end{aligned}
	\end{equation*}
	where $ \alpha $ is a non-zero element in $ \mathbb{F}_q $ other than identity.
	
	\textbf{Case II (a):} When $ N\geq 2 $ and $q$ is a power of $2$.
	
	Consider the identity matrix $ I_{M+N} $ of order $M+N$. By performing the following two row operations sequentially: 
	\begin{equation} \label{eq:row.operation}
	\begin{aligned}
	&R_0 + R_s \rightarrow R_s \hspace{0.5cm} \text{for each} \hspace{0.2cm} s \in \{1,2, \dots , M+N-1\}; \\
	&R_s + (R_{M+N-1}+R_{M+N}) \rightarrow R_s \hspace{0.5cm} \text{for each} \hspace{0.2cm} s \in [M], \\
	\end{aligned}
	\end{equation}
	we get the following matrix: 

    	\begin{equation*}
	\renewcommand{\arraystretch}{1.6}
	\left(
	\begin{array}{c@{}cccc:ccccc}
	\LF{\mathsmaller{R_0}}          & \UP{c_0}{1}  & \UP{c_1}{0} & \UP{\dots}{\dots \ } & \ \ \UP{c_{M-1}}{0 \ \ } \ \ \ & \ \ \ \UP{c_{M}}{0} \ \ \  & \ \UP{c_{M+1}}{0 \ } \  & \ \ \UP{\dots}{\dots \ } \ \ & \ \ \UP{c_{M+N-2}}{1 } \ \ \ & \ \ \ \ \ \UP{c_{M+N-1}}{1 \ \ } \ \ \ \\ 
	\LF{\mathsmaller{R_1}}              & 1  		& 1 	 &{\dots} & 0 \ \ \ 	 & \ \ \ 0 \ \ \  	   & 0 		& \dots  & 1 \   &  \ \ 1 \ \  \\ 
	\LF{\vdots} 	      & \vdots  & \vdots & \ddots & \vdots \ \ \ & \ \ \ \vdots \ \ \  & \vdots & \ddots & \ \vdots \ \    & \ \ \vdots \ \ \\
	\LF{\mathsmaller{R_{M-1}}}          	  & 1  		& 0 	 &{\dots} & 1 \ \ \ 	 & \ \ \ 0 \ \ \  	   & 0 		& \dots  & 1 \ \   & \ \ 1 \ \ \\ \hdashline
	\LF{\mathsmaller{R_{M}}}          & 1  		& 0 	 &{\dots} & 0 \ \ \ 	 & \ \ \ 1 \ \ \  	   & 0 		& \dots  & 0  \ \  & \ \ 0 \ \ \\
	\LF{\mathsmaller{R_{M+1}}}          & 1  		& 0 	 &{\dots} & 0 \ \ \ 	 & \ \ \ 0 \ \ \  	   & 1 		& \dots  & 0 \ \   & \ \ 0 \ \ \\
	\LF{\vdots}           & \vdots  & \vdots & \ddots & \vdots \ \ \ & \ \ \ \vdots \ \ \  & \vdots & \ddots & \vdots \ \    & \ \ \vdots \ \ \\ 
	\LF{\mathsmaller{R_{M+N-1}}}          & 1  		& 0 	 &{\dots} & 0 \ \ \ 	 & \ \ \ 0 \ \ \  	   & 0 		& \dots  & 0 \ \   & \ \ 1 \ \ \\
	\end{array}\right)
	\end{equation*}
	
	Now corresponding to each row $ R_s = (r_{s,\mathsmaller{0}},\dots, r_{s,(M+N-1)})$ of the above matrix we define a polynomial $ f_s = r_{s,\mathsmaller{0}}x_0 + \dots + r_{s,(M+N-1)x_{M+N-1}}  $ in $ \mathbb{F}_q $, where, $ s \in [M+N]$. Then these polynomials satisfy the conditions (i) and  (ii)  and are linearly independent. 
	
	\textbf{Case II (b):} When $ N \geq 2 $ and $q$ is not a power of $2$.
	
	In this case again take the identity matrix $ I_{M+N} $ and by replacing the second row operation  in (\ref{eq:row.operation}) by $ R_s + R_{M+N} \rightarrow  R_s $ for each $ s\in [M]$, we get the following matrix: 
	
	\begin{equation*} 
	\renewcommand{\arraystretch}{1.6}
	\left(
	\begin{array}{c@{}cccc:cccc}
	\LF{\mathsmaller{R_0}}          & \UP{c_0}{2}  & \UP{c_1}{0} & \UP{\dots}{\dots} & \ \ \UP{c_{M-1}}{0 \ \ } \ \ \ & \ \ \ \UP{c_{M}}{0} \ \ \  & \ \UP{c_{M+1}}{0 \ } \  & \ \ \UP{\dots}{\dots \ } \ \ &\ \ \ \UP{c_{M+N-1}}{1 \ \ } \ \ \ \\  
	\LF{\mathsmaller{R_1}}              & 2  		& 1 	 &{\dots} & 0 \ \ \ 	 & \ \ \ 0 \ \ \  	   & 0 		& \dots  & 1 \ \ \\ 
	\LF{\vdots} 	      & \vdots  & \vdots & \ddots & \vdots \ \ \ & \ \ \ \vdots \ \ \  & \vdots & \ddots & \vdots \ \ \\
	\LF{\mathsmaller{R_{M-1}}}          	  & 2  		& 0 	 &{\dots} & 1 \ \ \ 	 & \ \ \ 0 \ \ \  	   & 0 		& \dots  & 1 \ \ \\ \hdashline
	\LF{\mathsmaller{R_{M}}}          & 1  		& 0 	 &{\dots} & 0 \ \ \ 	 & \ \ \ 1 \ \ \  	   & 0 		& \dots  & 0 \ \ \\
	\LF{\mathsmaller{R_{M+1}}}          & 1  		& 0 	 &{\dots} & 0 \ \ \ 	 & \ \ \ 0 \ \ \  	   & 1 		& \dots  & 0 \ \ \\
	\LF{\vdots}           & \vdots  & \vdots & \ddots & \vdots \ \ \ & \ \ \ \vdots \ \ \  & \vdots & \ddots & \vdots \ \ \\
	\LF{\mathsmaller{R_{M+N-1}}}          & 1  		& 0 	 &{\dots} & 0 \ \ \ 	 & \ \ \ 0 \ \ \  	   & 0 		& \dots  & 1 \ \ \\
	\end{array}\right)
	\end{equation*}
	
	It is easy to see that the corresponding polynomials are linearly independent in $ \mathbb{F}_q $ and satisfy the conditions (i) and (ii).
	\end{proof}
	
	\begin{theorem}
Let $q\geq 2$ be a prime power. 
Let $M, N\geq 1$ and $(M,N,q)\neq (1,1,2)$. 
		There exists an array  of type $I_{M+N}(q^M, q^N; q) $. 
		\label{thm:SOFRS.primepowers}
	\end{theorem}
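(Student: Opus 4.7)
The plan is to construct the required array directly from the linearly independent linear forms provided by Lemma~\ref{lem:FR.Polynomilas.existence.}. I would index the rows of the array by the vectors $(x_0,\ldots,x_{M-1})\in \mathbb{F}_q^M$ and the columns by the vectors $(x_M,\ldots,x_{M+N-1})\in \mathbb{F}_q^N$. In the cell corresponding to the row $(x_0,\ldots,x_{M-1})$ and column $(x_M,\ldots,x_{M+N-1})$, I would place the vector
$$\bigl(f_0(x_0,\ldots,x_{M+N-1}),\, f_1(x_0,\ldots,x_{M+N-1}),\, \ldots,\, f_{M+N-1}(x_0,\ldots,x_{M+N-1})\bigr)\in \mathbb{F}_q^{M+N}.$$
After relabelling field elements to $[q]$, this yields a candidate $q^M\times q^N$ array with entries in $[q]^{M+N}$.

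Once the construction is in place, three verifications remain. First, that the multiset of entries is exactly $[q]^{M+N}$: by the linear independence of $f_0,\ldots,f_{M+N-1}$, the map sending $(x_0,\ldots,x_{M+N-1})$ to $(f_0,\ldots,f_{M+N-1})$ is a bijection of $\mathbb{F}_q^{M+N}$, so each vector occurs in exactly one cell (giving $\alpha=1$). Second, row regularity: fix a row, i.e.\ fix $(x_0,\ldots,x_{M-1})$, and fix a coordinate $r\in [M+N]$. Then, as the column index varies, $f_r$ becomes an affine form in $(x_M,\ldots,x_{M+N-1})$ whose linear part $(a_{r,M},\ldots,a_{r,M+N-1})$ is non-zero by condition (ii) of the lemma; hence this affine form takes each value of $\mathbb{F}_q$ exactly $q^{N-1}=q^N/q$ times as the column varies. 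Third, column regularity: symmetrically, with a fixed column and a fixed $r$, condition (i) guarantees the resulting affine form in $(x_0,\ldots,x_{M-1})$ is non-constant, so each element of $\mathbb{F}_q$ occurs exactly $q^{M-1}=q^M/q$ times per column.

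These three verifications together establish that the resulting array is of type $I_{M+N}(q^M,q^N;q)$. The hypothesis $(M,N,q)\neq (1,1,2)$ enters only through the invocation of Lemma~\ref{lem:FR.Polynomilas.existence.}, where it is required to produce the polynomials with the non-vanishing conditions on both the row variables and the column variables. I do not anticipate a substantive obstacle beyond this: all the combinatorial content has been pushed into the previous lemma, and what remains is a short and essentially routine bijection/counting argument over $\mathbb{F}_q$.
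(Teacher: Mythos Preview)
Your proposal is correct and follows essentially the same approach as the paper: both index rows and columns by $\mathbb{F}_q^M$ and $\mathbb{F}_q^N$, fill each cell with the tuple of values of the linear forms from Lemma~\ref{lem:FR.Polynomilas.existence.}, use conditions (i) and (ii) to verify row/column regularity, and invoke linear independence to show each vector of $[q]^{M+N}$ occurs exactly once. Your presentation is slightly more streamlined (phrasing the regularity check as an affine map with nonzero linear part rather than explicitly solving for one variable), but the content is identical.
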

	
	\begin{proof}
		\sloppy First we describe a method to construct a frequency rectangle of type $ FR(q^M, q^N; q) $ corresponding to polynomial $f_r$ for each $r\in [M+N]$, as given  by  Lemma \ref{lem:FR.Polynomilas.existence.}.
		
		Label the rows and columns of a $ q^{M} \times q^{N} $ array, respectively, by using the set of all $M$-tuples  and $N$-tuples over the field  $ \mathbb{F}_q $. Now consider a polynomial
		\begin{equation*}
		f_r(x_0, ... , x_{M+N-1}) = a_{r,0}x_{r,0}+ \cdots + a_{r,M+N-1}x_{r,M+N-1}
		\end{equation*}
		over the field $ \mathbb{F}_q $ that satisfies the conditions given in Lemma \ref{lem:FR.Polynomilas.existence.}. 
		We place the element $ f(b_0,...,b_{M-1},c_0, ..., c_{N-1}) $ in the intersection of row $ (b_0,..., b_{M-1}) $ and column $ (c_0, ... , c_{N-1}) $ of the $ q^{M} \times q^{N} $ array.
		
		Now we show that the array obtained in this way is a frequency rectangle of type $ FR(q^M, q^N; q) $, that is every element of $ \mathbb{F}_q $ appears exactly $ q^{N-1} $ times in each row and $ q^{M-1} $ times in each column. Consider a row which is labelled by $ (b_0, ... , b_{M-1}) $ and take an element $ \alpha \in  \mathbb{F}_q $. For this row, the equation 
		\begin{gather*}
		f_{r}(b_0, ... , b_{M-1},x_{M},..., x_{M+N-1}) = \alpha
		\end{gather*}
		reduces to the equation
		\begin{gather}
		K + a_{r,M}x_{M} + ... + a_{r,M+N-1}x_{M+N-1} = \alpha
		\label{eq:FR.proof}
		\end{gather}
		where $ K $ is a constant. Now by axiom (ii) of Lemma \ref{lem:FR.Polynomilas.existence.} there exists $i\in \{M,M+1,\dots ,M+N-1\}$ such that $ a_{r,i} \neq 0 $. We solve the equation (\ref{eq:FR.proof}) for $ x_i $:
		\begin{gather}
		x_i = \frac{1}{a_{r,i}} (\alpha - K - a_{r,M}x_{M} - \cdots - a_{r,i-1}x_{i-1}-a_{r,i+1}x_{i+1} - \cdots - a_{r,M+N-1}x_{M+N-1}).
		\label{eq:FRproof2}
		\end{gather}
		Since there are $ q $ elements in $ \mathbb{F}_q $ and the $ N-1 $ variables on the right side of (\ref{eq:FRproof2}) can take any value from $ \mathbb{F}_q $, the equation (\ref{eq:FRproof2}) has exactly $ q^{N-1} $ solutions in $ \mathbb{F}_q $. This implies that the symbol $ \alpha $ appears exactly at $ q^{N-1} $ places in the row $ (b_0, ... , b_{M-1}) $. By a similar argument we can prove that each symbol appears exactly $ q^{M-1} $ times in each column. Thus the resulting array is a frequency rectangle of type $ FR(q^M, q^N; q) $.
		
		Now to construct an array of type  $I_{M+N}(q^M, q^N; q) $. Consider a set of $M+N$ linearly independent polynomials
		\begin{equation*}
		f_r(x_0, ... , x_{M+N-1}) = a_{r,0}x_0+ \cdots + a_{r,M+N-1}x_{M+N-1}; \hspace{0.5cm} r\in [M+N] 
		\end{equation*}
		over the field $ \mathbb{F}_q $, such that the coefficients satisfy the conditions (i) and (ii) of Lemma \ref{lem:FR.Polynomilas.existence.}. 
		As above, for each 
		 $r\in [M+N]$ we obtain a frequency rectangle $F_r$ of type $FR(q^M,q^N;q)$ 
		
		It remains to show that $F_0\oplus F_1\oplus \dots \oplus F_{M+N-1}$ is an array of type 
		$I_{M+N}(q^M, q^N; q) $.
		To this end, consider any $(\alpha_0, \alpha_1,...,\alpha_{M+N-1})\in (\mathbb{F}_q)^{M+N} $. Since the polynomials above are linearly independent, the system of equations:
		\begin{equation*}
		\begin{aligned}
		&a_{0,0}x_0 & + \ \cdots \ + &\ a_{0,M+N-1}x_{M+N-1} \ \ \ \ &= \ \ \alpha_0 \\
		&a_{1,0}x_0 & + \ \cdots \ + &\ a_{1,M+N-1}x_{M+N-1} \ \ \ \ &= \ \ \alpha_1 \\
		& \ \ \ 	\vdots & \ddots \ \ \ \   &\  \ \ \ \ \ \  \vdots  &  \ \  \vdots  \ \\
		&a_{M+N-1,0}x_0 & + \ \cdots \ +& \ a_{M+N-1,M+N-1}x_{M+N-1}  &= \ \ \alpha_{M+N-1}\\
		\end{aligned}
		\end{equation*}
		
		has rank $M+N$ and therefore has a unique solution in $ \mathbb{F}_q $, which shows that $(\alpha_0, \alpha_1,...,\alpha_{M+N-1})$ appears in exactly one cell of the array constructed. 
	\end{proof}
	
	Corollary \ref{cor:blowup} implies the following: 
	
	\begin{corollary}
		If $(q,M,N)\neq (2,1,1)$, there exist an array of type $I_{M+N}(q^Mb_1, q^Nb_2; q) $, for any prime power $q$.
\label{cor:blowitup}	
\end{corollary}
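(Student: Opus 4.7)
The plan is to combine the two preceding results in a single step, since the corollary is an immediate consequence of them.

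First, I would note that the hypothesis $(q,M,N) \neq (2,1,1)$ is precisely the exclusion appearing in Theorem \ref{thm:SOFRS.primepowers} (written there as $(M,N,q) \neq (1,1,2)$). Since $q$ is assumed to be a prime power and $M, N \geq 1$, Theorem \ref{thm:SOFRS.primepowers} applies and yields an array $D$ of type $I_{M+N}(q^M, q^N; q)$.

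Next, I would apply Corollary \ref{cor:blowup} to $D$ with the choice $m' = b_1$ and $n' = b_2$. That corollary states that from any array of type $I_k(m,n;q)$ one can build an array of type $I_k(mm', nn'; q)$ for any positive integers $m', n'$, so with $k = M+N$, $m = q^M$, $n = q^N$ we obtain an array of type $I_{M+N}(q^M b_1, q^N b_2; q)$, which is exactly what is claimed.

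There is essentially no obstacle here: the statement is a direct specialization of \ref{thm:SOFRS.primepowers} followed by the trivial blow-up in \ref{cor:blowup}. The only point worth checking is that the case exclusion is preserved, but this is transparent since no additional divisibility or parity conditions are imposed by the blow-up step. Thus the proof reduces to a one-line chain of implications and no further computation is required.
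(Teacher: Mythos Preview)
Your proposal is correct and matches the paper's approach exactly: the paper simply states that Corollary~\ref{cor:blowup} implies the result, which is precisely the chain Theorem~\ref{thm:SOFRS.primepowers} $\Rightarrow$ Corollary~\ref{cor:blowup} that you spell out.
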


\subsection{``Sudoku'' Frequency Rectangles} \label{sec:sudokuFRS}

In this subsection we take the construction above and take it one step further. Specifically, we show in Theorem \ref{thm:SOFRS.i+j+1}
that if $q$ divides $b_1b_2$ and $q$ is a prime power, there exists an array of type $I_{M+N+1}(q^Mb_1, q^Nb_2; q)$. 
	
First we describe a Latin square which has a Sudoku-type property with 
$q=q_1q_2$ symbols, where $q_1$ and $q_2$ are positive integers and the symbol set is taken to be $[q]$. That is, such a Latin square can be partitioned into $q_1\times q_2$ subarrays containing each element of $[q]$.

\begin{theorem}
\sloppy 
Let $q_1,q_2\geq 1$. 
Then there exists a Latin square $L(q_1,q_2)$ of order $q_1q_2$ such that
for each  $i\in [q_1]$ and $j\in [q_2]$,  
 the set of cells 
$$\{(i',j')\mid i\equiv i' \pmod{q_1},j\equiv j' \pmod{q_2}\}$$
contain each entry from $[q_1q_2]$ exactly once.
%; and (b) the set of cells 
%$$\{(i',j')\mid 
%\lfloor i/q_1\rfloor = \lfloor i'/q_1\rfloor,   
  %      \lfloor j/q_2\rfloor = \lfloor j'/q_2\rfloor \} $$ 
%contain each entry from $[q_1q_2]$ exactly once. 
\end{theorem}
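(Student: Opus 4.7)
My plan is to give an explicit direct-product style construction. Write every row index $a\in[q_1q_2]$ uniquely as $a=i+q_1r$ with $i\in[q_1]$ and $r\in[q_2]$, and every column index $b\in[q_1q_2]$ uniquely as $b=j+q_2s$ with $j\in[q_2]$ and $s\in[q_1]$; thus I factor the row coordinates as $[q_1]\times[q_2]$ and the column coordinates as $[q_2]\times[q_1]$ (deliberately with the two factors in opposite order). Then set
\[ L[a,b] = u + q_1 v, \qquad u \equiv i+s \pmod{q_1},\ v \equiv j+r \pmod{q_2}. \]
Since $(u,v)\mapsto u+q_1v$ is a bijection $[q_1]\times[q_2]\to[q_1q_2]$, all three required properties reduce to showing that the pair $(u,v)$ ranges bijectively over $[q_1]\times[q_2]$ under three different parameterisations.

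For a fixed row, $i$ and $r$ are fixed while $(j,s)$ ranges over $[q_2]\times[q_1]$; then $u=(i+s)\bmod q_1$ depends only on $s$ and $v=(j+r)\bmod q_2$ depends only on $j$, so $(s,j)\mapsto(u,v)$ is clearly a bijection, giving the row-Latin property. The column argument is symmetric, with $(i,r)$ playing the role of $(j,s)$. For the Sudoku property, fix $(i,j)\in[q_1]\times[q_2]$; the cells of the corresponding block are parameterised by $(r,s)\in[q_2]\times[q_1]$, and once more $u$ depends only on $s$ and $v$ only on $r$, so $(s,r)\mapsto(u,v)$ is a bijection and every symbol of $[q_1q_2]$ appears exactly once in the block.

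Because the construction is entirely explicit and the verification reduces to elementary coordinate bookkeeping, I do not anticipate any serious obstacle; the only point that needs care is keeping the two factorizations in opposite orders (rows written as $i+q_1r$ but columns written as $j+q_2s$), so that the ``outer'' indices $r,s$ control the Sudoku partition while the ``inner'' indices $i,j$ supply the shifts needed to achieve the Latin row and column properties.
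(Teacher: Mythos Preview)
Your proof is correct. Both you and the paper give an explicit construction, and the row/column index decompositions coincide: the paper also writes a row index as $aq_1+i$ with $i\in[q_1]$ and a column index as $bq_2+j$ with $j\in[q_2]$, via its partition into contiguous $q_1\times q_2$ subarrays $S_0,S_1,\dots,S_{q-1}$. The difference lies in the formula for the entry. The paper sets
\[
L[aq_1+i,\,bq_2+j]=(i+q_1 j+aq_1+b)\bmod(q_1q_2),
\]
i.e.\ it works in the cyclic group $\mathbb{Z}_{q_1q_2}$ and obtains each block $S_k$ as $S_0+k$; you instead work in the direct product $\mathbb{Z}_{q_1}\times\mathbb{Z}_{q_2}$, taking the entry to be $\bigl((i+s)\bmod q_1,\,(j+r)\bmod q_2\bigr)$. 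When $\gcd(q_1,q_2)>1$ these two Latin squares are genuinely different. Your choice makes each of the three required bijections factor as a product of two independent cyclic shifts, so the verification is short and completely transparent; the paper's version has the minor aesthetic advantage of being a single cyclic development of one block, but in fact the paper states only the construction and leaves all verification (including that the array is a Latin square) to the reader, whereas you carry it out in full.
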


\begin{proof}
Let $q=q_1\times q_2$, 
Consider the following array of size $ q \times q $  partitioned into $ q $ subarrays $ S_0, S_1, ..., S_{q-1} $, each of size $ q_1 \times q_2 $: 
\begin{table}[H]
	\begin{center}
		\begin{tabular}{c|ccc|ccc|ccp{0.7cm}ccc|ccc|}
			\multicolumn{1}{c}{} &\multicolumn{1}{c}{$\leftarrow$}&	$ q_2 $	  &\multicolumn{1}{c}{$\rightarrow$}&$ \leftarrow $ & $ q_2 $	 	  &\multicolumn{1}{c}{$\rightarrow$}&	\multicolumn{6}{c}{\ldots}	  &\multicolumn{1}{c}{$\leftarrow$}&	$ q_2 $	  &\multicolumn{1}{c}{$\rightarrow$} \\ \cline{2-16}	
			$\uparrow$ &&		  &&&	 	  &&&		 &&&&&&			 & \\
			$ q_1 $  &&$ S_0 $ &&& $ S_1 $ && \multicolumn{6}{c|}{\ldots} & \multicolumn{3}{c|}{$ S_{q_1-1}$}  \\  
			$ \downarrow $	&&		  &&&	 	  &&&		 &&&&&&			 & \\ \cline{2-16}
			$\uparrow$ &&		  &&&	 	  &&&		 &&&&&&			 & \\
			$ q_1 $  & \multicolumn{3}{c|}{$S_{q_1}$} & \multicolumn{3}{c|}{$ S_{q_1 + 1}$} & \multicolumn{6}{c|}{\ldots} & \multicolumn{3}{c|}{$ S_{2q_1-1}$}  \\  
			$ \downarrow $	&&		  &&&	 	  &&&		 &&&&&&			 & \\ \cline{2-16}
			&&		  &&&	 	  &&&		 &&&&&&			 & \\
			\vdots && \vdots &&& \vdots && \multicolumn{6}{c|}{$\ddots$} && \vdots & \\  
			&&		  &&&	 	  &&&		 &&&&&&			 & \\ \cline{2-16}
			$\uparrow$ &&		  &&&	 	  &&&		 &&&&&&			 & \\
			$ q_1 $  &\multicolumn{3}{c|}{$ S_{(q-q_1)}$} & \multicolumn{3}{c|}{$ S_{(q-q_1)+1} $} & \multicolumn{6}{c|}{\ldots} & \multicolumn{3}{c|}{$ S_{q-1} $}  \\  
			$ \downarrow $	 &&		  &&&	 	  &&&		 &&&&&&			 & \\ \cline{2-16}
		\end{tabular}
		\caption*{$L(q_1,q_2)$}
	\end{center} 
\end{table}
We next describe the structure of $ S_0 $. Arrange the $ q $ symbols in the sub array $ S_0 $ in ascending order starting from column $ 1 $. Thus $ S_0 $ has the following form:

\begin{table}[H]
	\begin{center}
		\renewcommand{\arraystretch}{1.3}
		\begin{tabular}{|ccccc|}
			\hline
			0&$ q_1 $&\multicolumn{2}{c}{\ldots}&$ (q-q_1) $ \\
			1&$ q_1 +1 $&\multicolumn{2}{c}{\ldots}&$ (q-q_1)+1 $ \\
			&&&&\\
			\vdots&\vdots&\multicolumn{2}{c}{$\ddots$}&\vdots \\
			&&&&\\
			$ q_1-1 $&$ 2q_1-1 $&\multicolumn{2}{c}{\ldots}&$ q-1 $ \\\hline
		\end{tabular}
		\caption*{$ S_0 $}
	\end{center} 
\end{table}

Now we define the subarray $ S_i $ to be the array obtained by adding the symbol $ i \pmod{q}  $ to each cell of $ S_0 $. 
Finally arrange these $ S_i $'s as described above to obtain $L(q_1,q_2)$. 
\end{proof}

\begin{example}
	We exhibit the previous construction in the case $ q=6, q_1 = 2 $ and $ q_2 = 3$: 

\begin{table}[H]
	\begin{center}
	\setlength{\tabcolsep}{10pt}
		\renewcommand{\arraystretch}{1.2}
		\begin{tabular}{|ccc|ccc|} \hline
			0&2&4 & 1&3&5 \\
			1&3&5 & 2&4&0 \\\hline
			2&4&0 & 3&5&1 \\
			3&5&1 & 4&0&2 \\\hline
			4&0&2 & 5&1&3 \\
			5&1&3 & 0&2&4 \\\hline
		\end{tabular}
	\caption*{$L(2,3)$}
	\end{center} 
\end{table}

\end{example}

Now, a Latin square of order $q$ is also an array of type $I_1(q,q;q)$. Thus, from Corollary \ref{cor:blowup}, we have the following corollary.  
\begin{corollary}
For any integers $\mu,\lambda,q_1,q_2\geq 1$, 
there exists a frequency rectangle of type $FR(q \mu, q \lambda; q)$ (where $q=q_1q_2$) 
such that
for each  $i\in [\mu q_1]$ and $j\in [\lambda q_2]$,   the set of cells 
$$\{(i',j')\mid i\equiv i' \pmod{\mu q_1},j\equiv j' \pmod{\lambda q_2}\}$$
contain each entry from $[q_1q_2]$ exactly once.
%; and (b) the set of cells 
%$$\{(i',j')\mid 
%\lfloor i/(\mu q_1)\rfloor = \lfloor i'/(\mu q_1)\rfloor,   
 %       \lfloor j/(\lambda q_2)\rfloor = \lfloor j'/(\lambda q_2)\rfloor \} $$ 
%contain each entry from $[q_1q_2]$ exactly once. 
\label{cor:sudokurec}
\end{corollary}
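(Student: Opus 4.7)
The plan is to derive this directly by applying Corollary \ref{cor:blowup} (the blow-up construction) to the Latin square $L(q_1,q_2)$ from the preceding theorem. Since a Latin square of order $q=q_1q_2$ is an array of type $I_1(q,q;q)$, blowing it up by $m'=\mu$ and $n'=\lambda$ produces an array of type $I_1(q\mu,q\lambda;q)$, which is exactly a frequency rectangle of the required type $FR(q\mu,q\lambda;q)$. So only the ``Sudoku'' partition condition needs verification.

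Concretely, I would use the following description of the blow-up: cell $(a,b)$ of the resulting $q\mu\times q\lambda$ array contains the symbol $L(q_1,q_2)[\alpha,\beta]$, where we decompose $a=\alpha\mu+\alpha'$ with $\alpha\in[q]$, $\alpha'\in[\mu]$ and $b=\beta\lambda+\beta'$ with $\beta\in[q]$, $\beta'\in[\lambda]$. Now fix $i\in[\mu q_1]$ and $j\in[\lambda q_2]$ and write $i=\alpha_0\mu+\alpha_0'$, $j=\beta_0\lambda+\beta_0'$ with $\alpha_0\in[q_1]$, $\alpha_0'\in[\mu]$, $\beta_0\in[q_2]$, $\beta_0'\in[\lambda]$. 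The condition $a\equiv i\pmod{\mu q_1}$ decomposes as $\alpha'=\alpha_0'$ together with $\alpha\equiv\alpha_0\pmod{q_1}$; analogously $b\equiv j\pmod{\lambda q_2}$ forces $\beta'=\beta_0'$ and $\beta\equiv\beta_0\pmod{q_2}$.

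Therefore the set of cells singled out in the corollary contains exactly the symbols $L(q_1,q_2)[\alpha,\beta]$ as $(\alpha,\beta)$ ranges over all pairs with $\alpha\equiv\alpha_0\pmod{q_1}$ and $\beta\equiv\beta_0\pmod{q_2}$. By the defining property of $L(q_1,q_2)$, these are precisely the $q_1q_2$ cells of one of its ``Sudoku'' sub-blocks, and hence contain every element of $[q_1q_2]$ exactly once. This is the desired conclusion.

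The argument is essentially bookkeeping — the only thing one has to be careful about is the Chinese-Remainder-style splitting of the congruence modulo $\mu q_1$ (respectively $\lambda q_2$), and I do not anticipate any real obstacle here since $\gcd(\mu,q_1)$ plays no role: the congruence splits simply because $a$ has a unique $\mu$-adic-style decomposition $\alpha\mu+\alpha'$ with $\alpha'\in[\mu]$.
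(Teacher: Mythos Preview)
Your proposal is correct and follows the same approach as the paper: apply Corollary~\ref{cor:blowup} to the Latin square $L(q_1,q_2)$, viewed as an array of type $I_1(q,q;q)$. The paper states this in one line and illustrates it with an example, whereas you have spelled out the verification of the Sudoku partition property; your decomposition of the congruence $i'\equiv i\pmod{\mu q_1}$ via the base-$\mu$ representation is exactly the right bookkeeping and goes through without issue.
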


\begin{example}
If $L$ is the Latin square $L(2,3)$, then $L\boxtimes I_1(2,3;1)$ yields a frequency rectangle of type $FR(12,18;6)$. The entries in bold show the elements of $[6]$ occurring in cells 
of the form $(i,j)$ where $i\equiv 1\pmod{4}$ and $j\equiv 2\pmod{9}$. 

\begin{table}[H]
	\begin{center}
	\setlength{\tabcolsep}{7pt}
		\renewcommand{\arraystretch}{1.2}
		\begin{tabular}{|ccccccccc|ccccccccc|} \hline
			0&0&0&2&2&2&4&4&4 & 1&1&1&3&3&3&5&5&5 \\
			0&0&{\bf 0}&2&2&2&4&4&4 & 1&1&{\bf 1}&3&3&3&5&5&5 \\
			1&1&1&3&3&3&5&5&5 & 2&2&2&4&4&4&0&0&0 \\
			1&1&1&3&3&3&5&5&5 & 2&2&2&4&4&4&0&0&0 \\\hline
			2&2&2&4&4&4&0&0&0 & 3&3&3&5&5&5&1&1&1 \\
			2&2&{\bf 2}&4&4&4&0&0&0 & 3&3&{\bf 3}&5&5&5&1&1&1 \\
			3&3&3&5&5&5&1&1&1 & 4&4&4&0&0&0&2&2&2 \\
			3&3&3&5&5&5&1&1&1 & 4&4&4&0&0&0&2&2&2 \\\hline
			4&4&4&0&0&0&2&2&2 & 5&5&5&1&1&1&3&3&3 \\
			4&4&{\bf 4}&0&0&0&2&2&2 & 5&5&{\bf 5}&1&1&1&3&3&3 \\
			5&5&5&1&1&1&3&3&3 & 0&0&0&2&2&2&4&4&4 \\
			5&5&5&1&1&1&3&3&3 & 0&0&0&2&2&2&4&4&4 \\\hline
		\end{tabular}
	\caption*{A frequency rectangle of type $FR(12,18;6)$ by Corollary \ref{cor:sudokurec}}
	\end{center} 
\end{table}

\end{example}

Before we prove Theorem 
	\ref{thm:SOFRS.i+j+1}, we require the following number-theoretic observation. 

	\begin{lemma}
		Let $ b_1, b_2 $ and $ q $ be positive integers such that $ q $ divides the product $ b_1b_2 $. Then there exist positive integers $ q_1 $ and $ q_2 $ such that $ q_1 q_2 = q$ and $ q_1 $ divides $ b_2 $ and $ q_2 $ divides $ b_1 $.
		\label{lem:FRq1q2}
	\end{lemma}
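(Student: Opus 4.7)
The plan is to prove this via prime factorization, handling each prime power in $q$ independently. For each prime $p$ dividing $q$, write $v_p(n)$ for the $p$-adic valuation of $n$, so that $a := v_p(q)$, $c := v_p(b_1)$, and $d := v_p(b_2)$ satisfy $a \leq c + d$ (by the hypothesis $q \mid b_1 b_2$).

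The key step is to split each exponent $a$ as $a = \alpha + \beta$ with $\alpha \leq d$ and $\beta \leq c$. I would take $\alpha := \min(a, d)$ and $\beta := a - \alpha = \max(0, a - d)$. Then $\alpha \leq d$ by construction, and $\beta \leq c$ follows from $a \leq c + d$, which rearranges to $a - d \leq c$ (and $0 \leq c$ is trivial).

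I then define
\[
q_1 := \prod_{p \mid q} p^{\alpha_p}, \qquad q_2 := \prod_{p \mid q} p^{\beta_p},
\]
where the product runs over primes dividing $q$, and $\alpha_p, \beta_p$ are the exponents chosen above for each prime $p$. Since $\alpha_p + \beta_p = v_p(q)$ for each such prime, we have $q_1 q_2 = q$. Moreover $\alpha_p \leq v_p(b_2)$ for each $p$, hence $q_1 \mid b_2$; and $\beta_p \leq v_p(b_1)$ for each $p$, hence $q_2 \mid b_1$.

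There is no genuine obstacle here: the only subtlety is verifying the inequality $\beta_p \leq c_p$ in the case $a_p > d_p$, which is exactly where the hypothesis $q \mid b_1 b_2$ is used. The argument is completely local at each prime, so prime factorization reduces the problem to the elementary observation that if $a \leq c + d$ then the interval $[\max(0, a-d), \min(a,c)]$ of valid splits is nonempty.
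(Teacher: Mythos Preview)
Your proof is correct and essentially identical to the paper's: both argue prime-by-prime, and your choice $\beta_p = \max(0, a_p - d_p)$, $\alpha_p = a_p - \beta_p$ is exactly the paper's $t_i = \max\{0, s_i - \beta_i\}$, $u_i = s_i - t_i$ (up to notation). The only superficial difference is that you phrase things in terms of $p$-adic valuations rather than writing out the full prime factorizations.
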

	
	\begin{proof}
		Let $ q = p^{s_0}_0p^{s_1}_1 \ldots p^{s_{m-1}}_{m-1} $ be the prime factorization of $ q $. Since $ q $ divides $ b_1b_2 $, $ b_1 $ and $ b_2 $ must be of the form:
		\begin{equation*}
		\begin{aligned}
		b_1 &= B_1 \:  p^{\alpha_0}_0 p^{\alpha_1}_1 \ldots p^{\alpha_{m-1}}_{m-1},\\
		b_2 &= B_2 \:  p^{\beta_0}_0 p^{\beta_1}_1 \ldots p^{\beta_{m-1}}_{m-1},\\
		\end{aligned}
		\end{equation*}
		
		where $ p_i $ does not divide $ B_j $ and $ \alpha_i + \beta_i \geq s_i $ for all $ i\in[m]$ and $ j\in\{1,2\}$. Let
		
		\begin{equation*}
		\begin{aligned}
		q_1 &=  p^{u_0}_0 p^{u_1}_1 \ldots p^{u_{m-1}}_{m-1} \\
		\end{aligned}
		\end{equation*}
		and
		\begin{equation*}
		\begin{aligned}
		q_2 &=  p^{t_0}_0 p^{t_1}_1 \ldots p^{t_{m-1}}_{m-1}, \\
		\end{aligned}
		\end{equation*}
		
where $t_i:=\max \{0, s_i - \beta_i \}$ and $ u_i = s_i - t_i $ for all $ i \in[m]$.

Since $\alpha_i + \beta_i \geq s_i $, $t_i\leq \alpha_i$ for each $i\in[m]$, which implies that $q_2 $ divides $ b_1 $.
Also $ s_i - \beta_i \leq t_i  $ implies that $ u_i = s_i - t_i \leq \beta_i $ and thus $ q_1 $ divides $ b_2 $.
Finally observe that $q_1 q_2 = q$.  
	\end{proof}

	\begin{theorem}
		Let $q$ be a divisor of $b_1b_2$. If there exist an array of type  $I_{M+N}(q^M, q^N;q)$, then there exists an array of type $I_{M+N+1}(q^Mb_1, q^Nb_2; q) $. 
		\label{thm:SOFRS.i+j+1}
	\end{theorem}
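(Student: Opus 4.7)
The plan is to build the target $I_{M+N+1}(q^Mb_1, q^Nb_2;q)$ as $D\oplus F_{M+N}$, where $D$ is a blow-up of the hypothesized array and $F_{M+N}$ is a carefully designed extra frequency rectangle. First I would apply Corollary~\ref{cor:blowup} to the given $I_{M+N}(q^M,q^N;q)$, obtaining an array $D$ of type $I_{M+N}(q^Mb_1, q^Nb_2;q)$. The crucial structural feature inherited from the blow-up is that each tuple $\vec\alpha\in[q]^{M+N}$ occurs in $D$ exactly $b_1b_2$ times, and these occurrences fill a single grid-aligned $b_1\times b_2$ block at some unique position $(rb_1,cb_2)$ with $(r,c)\in[q^M]\times[q^N]$.

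Next I would invoke Lemma~\ref{lem:FRq1q2} to factor $q=q_1q_2$ with $q_2\mid b_1$ and $q_1\mid b_2$, writing $c_1:=b_1/q_2$ and $c_2:=b_2/q_1$. The task then reduces to constructing $F_{M+N}$, a frequency rectangle of type $FR(q^Mb_1, q^Nb_2;q)$, such that each grid-aligned $b_1\times b_2$ sub-block contains every element of $[q]$ exactly $c_1c_2=b_1b_2/q$ times. Once this is done, the factorial design property for $D\oplus F_{M+N}$ is automatic: for each $(\vec\alpha,v)\in[q]^{M+N+1}$, the tuple $\vec\alpha$ lives in exactly one $b_1\times b_2$ block of $D$, and inside that block $F_{M+N}$ places the value $v$ in exactly $b_1b_2/q$ cells, matching the required density.

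For the construction of $F_{M+N}$ itself, I would fix any bijection $L\colon [q_2]\times[q_1]\to[q]$ (the existence of $L$ is trivial, but it plays the role of a Latin square in the $[q_2]\times[q_1]$ format) and any frequency rectangle $H$ of type $FR(q^M,q^N;q)$ (which exists for free, for instance as a tiled Latin square of order $q$). Then define
\[
F_{M+N}[rb_1+i][cb_2+j] \;:=\; L(i\bmod q_2,\; j\bmod q_1) \;+\; H[r][c] \pmod{q},
\]
for $(r,c)\in[q^M]\times[q^N]$, $i\in[b_1]$, $j\in[b_2]$. The block-uniformity is immediate: within a fixed block $(r,c)$, the shift $H[r][c]$ is constant, and as $(i,j)$ ranges over $[b_1]\times[b_2]$ the reduced pair $(i\bmod q_2,\, j\bmod q_1)$ hits each point of $[q_2]\times[q_1]$ exactly $c_1c_2$ times, so $L$ (being a bijection) guarantees each $[q]$-value appears $c_1c_2$ times.

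The main obstacle is verifying that $F_{M+N}$ is itself a frequency rectangle, since the row $rb_1+i$ intersects only the rows $i\bmod q_2$ of $L$, which miss most of $[q]$. This is exactly where the auxiliary $H$ is needed: a row count of a fixed $v\in[q]$ collapses to $c_2\sum_{c\in[q^N]}\mathbf{1}[v-H[r][c]\in S_{i\bmod q_2}]$, where $S_{i\bmod q_2}\subseteq[q]$ is the $q_1$-subset of entries of row $i\bmod q_2$ of $L$. Row-regularity of $H$ forces $v-H[r][c]$ to take each $[q]$-value exactly $q^{N-1}$ times, so the sum equals $q_1 q^{N-1}$, giving row frequency $c_2q_1q^{N-1}=q^{N-1}b_2$ as required; column-regularity of $H$ yields column balance symmetrically. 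Combining Steps~1--4 then gives $D\oplus F_{M+N}$ as the required $I_{M+N+1}(q^Mb_1, q^Nb_2;q)$.
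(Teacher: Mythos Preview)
Your argument is correct, and the overall strategy matches the paper's: factor $q=q_1q_2$ via Lemma~\ref{lem:FRq1q2}, blow up the hypothesised design, and append one extra frequency rectangle that is balanced on the blocks carrying a fixed $[q]^{M+N}$-tuple. One small presentational point: Corollary~\ref{cor:blowup} is stated only as an existence result, so you should say explicitly that you take $D=I_{M+N}(q^M,q^N;q)\boxtimes I_{M+N}(b_1,b_2;1)$ to obtain the contiguous $b_1\times b_2$ block structure you rely on.

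The genuine difference lies in how the extra layer is built. The paper first blows up only by $(q_2,q_1)$, appends a ``Sudoku'' frequency rectangle $J'$ obtained from the Latin square $L(q_1,q_2)$ of order $q$ via Corollary~\ref{cor:sudokurec} (so that residue classes $\bmod\,(q^M,q^N)$ form transversals), and only then applies the remaining $(b_1/q_2,b_2/q_1)$ blow-up. You instead blow up all the way to $(q^Mb_1,q^Nb_2)$ first and construct $F_{M+N}$ directly from a bare bijection $L\colon[q_2]\times[q_1]\to[q]$; since $L$ alone hits only $q_1$ symbols per row, you compensate with the auxiliary $H\in FR(q^M,q^N;q)$ as an additive shift. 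Your route bypasses the explicit Sudoku Latin square construction of Section~\ref{sec:sudokuFRS} at the price of introducing $H$, while the paper's route packages the row/column regularity into the single object $L(q_1,q_2)$; both executions are of comparable length and rest on the same divisibility lemma.
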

	
	\begin{proof}
		By Lemma \ref{lem:FRq1q2} we can choose $ q_1,  q_2  $ such that $ q_1 q_2 = q $ and $ q_1 $ divides $ b_2 $ and $ q_2 $ divides $ b_1 $. 
%Take the Sudoku Latin square of size $ q $ as described in section \ref{sec:sudokuFRS} with parameters $ q_1  $ and $ q_2 $ and blow up each cell to an array of size $ \nicefrac{q^i}{q_1} \times \nicefrac{q^j}{q_2} $. 

Let $I'$ be the array $I_1(q_2,q_1;1)\boxtimes I_{M+N}(q^M,q^N;q)$ as shown in Table 	\ref{tbl:sudoku}.

		\begin{table}[H]
			\begin{center}
				\resizebox{13.0cm}{!}{%
					\begin{tabular}{c|ccc|ccc|ccp{0.7cm}ccc|ccc|}
						\multicolumn{1}{c}{} &\multicolumn{1}{c}{$\leftarrow$}&	$ q^N $	  &\multicolumn{1}{c}{$\rightarrow$}&$ \leftarrow $ & $ q^N $	 	  &\multicolumn{1}{c}{$\rightarrow$}&	\multicolumn{6}{c}{$\ldots$}	 &\multicolumn{1}{c}{$\leftarrow$}&	$ q^N $	  &\multicolumn{1}{c}{$\rightarrow$} \\ \cline{2-16}	
						$\uparrow$ &&		  &&&	 	  &&&		 &&&&&&			 & \\
						$ q^M$  &&$ I_{M+N} $ &&& $ I_{M+N} $ && \multicolumn{6}{c|}{\ldots} && $ I_{M+N} $  &  \\  
						$ \downarrow $	&&		  &&&	 	  &&&		 &&&&&&			 & \\ \cline{2-16}
						$\uparrow$ &&		  &&&	 	  &&&		 &&&&&&			 & \\
						$ q^M$  & \multicolumn{3}{c|}{$ I_{M+N} $ } & \multicolumn{3}{c|}{$ I_{M+N} $} & \multicolumn{6}{c|}{\ldots} & \multicolumn{3}{c|}{$ I_{M+N} $ }  \\  
						$ \downarrow $	&&		  &&&	 	  &&&		 &&&&&&			 & \\ \cline{2-16}
						&&		  &&&	 	  &&&		 &&&&&&			 & \\
						\vdots && \vdots &&& \vdots && \multicolumn{6}{c|}{$\ddots$} && \vdots & \\  
						&&		  &&&	 	  &&&		 &&&&&&			 & \\ \cline{2-16}
						$\uparrow$ &&		  &&&	 	  &&&		 &&&&&&			 & \\
						$ q^M $  &\multicolumn{3}{c|}{$ I_{M+N} $} & \multicolumn{3}{c|}{$ I_{M+N} $} & \multicolumn{6}{c|}{\ldots} & \multicolumn{3}{c|}{$ I_{M+N} $}  \\  
						$ \downarrow $	 &&		  &&&	 	  &&&		 &&&&&&			 & \\ \cline{2-16}
				\end{tabular}}
				\caption{$I'= I_1(q_2,q_1;1)\boxtimes I_{M+N}(q^M,q^N;q)$}
				\label{tbl:sudoku}
			\end{center} 
			\end{table}

Applying Corollary \ref{cor:sudokurec}
with $\mu=q^{M-1}q_2$ and $\lambda=q^{N-1}q_1$, 
 there exists  a rectangular array $J'$ of type $I_1(q^Mq_2, q^Nq_1;q)$ such that   
for each  $i\in [q^M]$ and $j\in [q^N]$, the set of cells 
$$\{(i',j')\mid i\equiv i' \pmod{q^M},j\equiv j' \pmod{q^N}\}$$
contain each entry from $[q_1q_2]$ exactly once.
It follows that the array $ I'\oplus J' $ contains each sequence of length $ M+N+1 $ exactly once. 
Thus $I'\oplus J'$ is of type $I_{M+N+1}(q^Mq_2,q^Nq_1;q)$. 
Finally, by Corollary \ref{cor:blowup}, $I_1(b_1/q_2,b_2/q_1;1) \boxtimes (I' \oplus J')$ is an array of type
 $ I_{M+N+1}(q^Mb_1, q^Nb_2; q) $. 	
	\end{proof}

\section{The case $k=2$}

In this section we prove Theorem \ref{thm:mainresult} in the case $k=2$. 
%(Theorem \ref{thm:SOFRS.pairs.existence} below).
 Given Theorem \ref{thm:pair.of.MOFRS}, it suffices to consider the  existence of an array of type $I_2(m,n;q)$ only in the case $q\in \{2,6\}$ and $m/q$ and $n/q$ odd. 
%We have two special cases here that we want to discuss separately. One %is when $ q =2 $ and the other one is $ q=6 $. 
From Theorem \ref{thm:pair.of.MOFS}, arrays of type $I_2(2,2;2)$ and 
 $I_2(6,6;6)$ do not exist. We next give another 
  non-existence result. 
	
	\begin{lemma} \label{lem:non.existence.of.pairs.q=2}
		There does not exist an array of type $I_2(2, n; 2)$ whenever $n/2$ is odd. 
	\end{lemma}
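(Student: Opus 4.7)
The plan is to use a short double-counting argument based on the two column types that are forced when $m=2$, and then derive a parity contradiction from the row regularity.

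First I would analyze the structure of the columns. Since $m=q=2$, the regularity condition on columns forces each column to contain, in each of the two vector positions, exactly one $0$ and one $1$. Hence the two entries of every column must be complementary vectors in both coordinates, so every column is either of ``type $A$'', containing the pair $\{00, 11\}$, or of ``type $B$'', containing the pair $\{01, 10\}$. Next, since the replication count is $\alpha = mn/q^k = n/2$, the vector $00$ appears exactly $n/2$ times in the array, and $00$ occurs only in columns of type $A$ (once each), so there are exactly $n/2$ columns of type $A$ and $n/2$ columns of type $B$.

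Next I would introduce the variables $x$ = number of type-$A$ columns with $00$ on top and $y$ = number of type-$B$ columns with $01$ on top, so that the remaining $n/2 - x$ type-$A$ columns have $11$ on top and the remaining $n/2 - y$ type-$B$ columns have $10$ on top. Applying the row regularity to the top row: the number of zeros appearing in the first vector-position of row $0$ is $x + y$, which must equal $n/2$; and the number of zeros appearing in the second vector-position of row $0$ is $x + (n/2 - y)$, which must also equal $n/2$. Combining these gives $x = y$ and $x + y = n/2$, so $x = n/4$.

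The contradiction is immediate: since $n/2$ is odd by hypothesis, $n/4$ is not an integer, ruling out any such array. There is no real obstacle here; the only subtlety is recognizing that the forced complementary structure of columns, together with the replication count, pins down the multiset of column types, leaving just enough freedom to run the parity argument on the top row.
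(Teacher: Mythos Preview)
Your proof is correct. It differs from the paper's argument in a useful way. The paper decomposes the array as $F\oplus F'$ into two frequency rectangles of type $FR(2,n;2)$, normalizes $F$ so that its first row is $0^{n/2}1^{n/2}$, and then uses a pigeonhole count on the first $n/2$ cells of $F'$ (exploiting that $n/2$ is odd) to show that the number of superimposed $(0,0)$ pairs exceeds the $n/2$ allowed by orthogonality. In contrast, you work directly with the column structure: you observe that every column is one of two complementary types, count each type via the replication number, and then let the two row-regularity constraints on the top row force $x=n/4$, giving an immediate parity contradiction.

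Both arguments are short, but yours is arguably the cleaner route: it avoids the normalization step and the pigeonhole inequality, and it makes the obstruction visible as a single integrality condition. The paper's version has the advantage of sitting naturally inside the orthogonal-frequency-rectangle language used elsewhere in the section, which is why they phrase it that way.
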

	
	\begin{proof}
		Consider a frequency rectangle $ F $ of type $ FR(2, n;2) $. By permuting columns we can assume $ F $ is in the following form: 
		\begin{table}[H]
			\begin{center}
				\renewcommand{\arraystretch}{1.2}
				\begin{tabular}{|cccc|cccc|}
					\hline
					0&0& \ldots &0&1&1& \ldots &1 \\\hline
					1&1& \ldots &1&0&0& \ldots &0 \\\hline
				\end{tabular}
			\end{center} 
		\end{table}
	
	Now consider any other frequency square $ F' $ of type $ FR(2, n; 2) $. Now since $ n \equiv 2 \pmod{4} $, $ F' $ contains at least $ \lfloor \nicefrac{n}{4}  \rfloor + 1 $ symbols of the same type (say $ 0 $) in the first $ n/2 $ cells of its first row. This implies there are at least $ \lfloor \nicefrac{n}{4}  \rfloor + 1 $ $ 1's $ in the second half of the first row and consequently we have $ \lfloor \nicefrac{n}{4}  \rfloor + 1 $ $0's $ in the second half of its second row. Thus if we superimpose $ F $ and $ F' $, we get at least $ 2 \times \lfloor \nicefrac{n}{4}  \rfloor + 2 > n/2 = 2n/4 $ ordered pairs of type $ (0,0) $. Which shows $ F $ and $ F' $ are not orthogonal.
	\end{proof}

	\begin{lemma}
	Let $m/2$ and $n/2$ be odd where $n\geq m>2$. 
		Then there exists an array of type $I_2(m, n; 2)$.
\label{lem:twoo}	
\end{lemma}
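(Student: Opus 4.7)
The plan is to construct $I_2(m,n;2)$ recursively via Lemma \ref{lem:glueing}, using only one genuinely new ingredient: a pair of MOFS of order $6$. Write $m=2a$ and $n=2b$ with $a,b$ odd and $3 \le a \le b$, so both $m$ and $n$ are congruent to $2 \pmod{4}$ and at least $6$. The base case $m=n=6$ is handled by Theorem \ref{thm:pair.of.MOFS} with $(n,q)=(6,2)$, which gives a pair of MOFS of type $F(6,6;2)$, equivalently an array of type $I_2(6,6;2)$.

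Next I would establish $I_2(6,n;2)$ for every $n \equiv 2 \pmod{4}$ with $n \ge 6$. For $n > 6$, write $n = 6 + 4k$ with $k \ge 1$. Since $4k/2 = 2k$ is even, Theorem \ref{thm:pair.of.MOFRS} supplies $I_2(6,4k;2)$ even though $q=2$. Gluing $I_2(6,6;2)$ and $I_2(6,4k;2)$ horizontally via Lemma \ref{lem:glueing} produces $I_2(6,n;2)$.

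Finally, for general $m$ with $6 < m \le n$ and $m \equiv 2 \pmod{4}$, write $m = 6 + 4k$ with $k \ge 1$. Since $4k/2 = 2k$ is even, Theorem \ref{thm:pair.of.MOFRS} supplies $I_2(4k,n;2)$. Stacking $I_2(6,n;2)$ on top of $I_2(4k,n;2)$ vertically via Lemma \ref{lem:glueing} then yields the desired $I_2(m,n;2)$.

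The key observation driving the plan — and the only place where the argument could have become delicate — is that Theorem \ref{thm:pair.of.MOFRS} fails for $F(m,n;2)$ precisely when both $m/2$ and $n/2$ are odd, yet any such pair with $m,n \ge 6$ can be decomposed so that a single $6 \times 6$ corner absorbs the entire parity defect while the complementary strips each have a side divisible by $4$, placing them back inside the hypotheses of Theorem \ref{thm:pair.of.MOFRS}. Once that decomposition is seen, the proof reduces to two applications of Lemma \ref{lem:glueing} on top of the known base case $I_2(6,6;2)$.
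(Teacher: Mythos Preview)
Your proof is correct, and the overall strategy—absorb the parity obstruction into one ``bad'' piece and glue on strips with a side divisible by $4$ via Lemma~\ref{lem:glueing}—is the same as the paper's. The decomposition, however, is genuinely different. The paper anchors on the full $m\times m$ square: it takes $I_2(m,m;2)$ directly from Theorem~\ref{thm:pair.of.MOFS} (using the existence of a pair of MOFS of every order $m\geq 3$), observes that $n-m\equiv 0\pmod 4$, builds $I_2(m,\,n-m;2)$ from its own prime-power construction (Theorem~\ref{thm:SOFRS.primepowers} and Corollary~\ref{cor:blowup}), and finishes with a single horizontal glue. You instead anchor on a fixed $6\times 6$ corner and peel off two strips, invoking Theorem~\ref{thm:pair.of.MOFRS} for each and gluing twice. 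Your route needs only one concrete pair of MOFS (order~$6$) rather than the general square case, and it stays entirely within the cited Federer--Hedayat--Mandeli result rather than the paper's field construction; the paper's route is one step shorter and avoids the two-stage induction. Both are clean and equally valid.
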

	
	\begin{proof}
		Let $ m = 2l_1 $ and $ n = 2l_2 $, where $ l_1 $ and $ l_2 $ are odd and $l_2\geq l_1>1$. Let $ l_2 = l_1 + 2t $. Now by Theorem \ref{thm:pair.of.MOFS} there exists 
an array of type $I_2(2l_1,2l_1; 2)$. 
By Theorem \ref{thm:SOFRS.primepowers} and Corollary \ref{cor:blowup} there exists an array of type 
$I_2(2l_1,4t;2)$. 
Thus by Lemma \ref{lem:glueing}, there exists 
an array of type $I_2(2l_1,2l_2;2)$. 
	\end{proof}

	Next we consider when $ q=6 $. 
	%We know that (see \cite{keedwell2015latin}) there does not exist %an array $I_2(6,6; 6) $, equivalently a pair of MOLS of order %$6$. 
	An array of type $I_2(6,12;6)$ exists by Theorem \ref{thm:pair.of.MOFS}. 
	We also exhibit an array of type $I_2(6,18;6)$: 

\begin{table}[H]
	\begin{center}
		\renewcommand{\arraystretch}{1.2}
		\resizebox{14.0cm}{!}{%
		\begin{tabular}{cccccccccccccccccc}
			13&24&35&40&51&02&15&24&30&43&51&02&10&24&33&45&51&02 \\
			34&43&01&52&20&15&34&45&01&52&23&10&34&40&01&52&25&13 \\
			43&32&10&25&04&53&41&32&13&20&04&55&41&32&15&23&04&50 \\
			22&11&54&03&45&30&22&11&54&05&40&33&22&11&54&00&43&35 \\
			50&05&23&31&12&44&53&00&25&31&12&44&55&03&20&31&12&44 \\
			04&50&42&14&33&21&00&53&42&14&35&21&03&55&42&14&30&21 \\
		\end{tabular}}
		\caption*{An array of type $I_2(6,18; 6)$.}
	\end{center} 
\end{table}

By Lemma \ref{lem:glueing}, we thus obtain the following. 
	\begin{lemma}
		There exists an array of type $I_2(6l_1, 6l_2; 6) $ if and only if $(l_1,l_2)\neq (1,1)$.
\label{lem:sixx}	
\end{lemma}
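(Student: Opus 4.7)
The plan is to assemble arbitrary $I_2(6l_1,6l_2;6)$ from the two seed arrays just exhibited, namely $I_2(6,12;6)$ and $I_2(6,18;6)$, via Lemma~\ref{lem:glueing} (column-wise concatenation when the row-dimension agrees) and Corollary~\ref{cor:blowup} (trivial blow-up in the row direction). Without loss of generality assume $l_1 \le l_2$. The excluded case $(l_1,l_2)=(1,1)$ is the non-existence of $I_2(6,6;6)$ recorded in Theorem~\ref{thm:pair.of.MOFS}, so it remains to treat $l_2 \ge 2$.

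First I would handle the strip case $l_1=1$. The key elementary input is that every integer $l_2\ge 2$ admits a representation $l_2=2a+3b$ with $a,b\in\mathbb{Z}_{\ge 0}$; this is the Frobenius/Chicken-McNugget fact for $\{2,3\}$, verified directly for $l_2\in\{2,3,4,5\}$ and then extended by adding $2$ repeatedly. Concatenating $a$ copies of $I_2(6,12;6)$ and $b$ copies of $I_2(6,18;6)$ along their common row-dimension $6$, one application of Lemma~\ref{lem:glueing} at a time, yields an array of type $I_2(6,\,12a+18b;\,6)=I_2(6,6l_2;6)$.

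For the remaining range $l_1\ge 2$ (and hence $l_2\ge l_1\ge 2$), I would apply Corollary~\ref{cor:blowup} with $m'=l_1$ and $n'=1$ to the array $I_2(6,6l_2;6)$ just built, producing $I_2(6l_1,6l_2;6)$. There is no serious obstacle: once the Frobenius observation is in hand, the entire proof is a bookkeeping exercise chaining the recursive tools of Section~2 with the two explicit seeds, exactly as signalled by the remark preceding the lemma.
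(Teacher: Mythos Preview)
Your proof is correct and follows essentially the same approach as the paper: both build $I_2(6,6l_2;6)$ for every $l_2\ge 2$ by column-wise concatenation of the two seeds via Lemma~\ref{lem:glueing} (using the representation $l_2=2a+3b$), and then extend the row dimension to $6l_1$. The only cosmetic difference is that the paper invokes Lemma~\ref{lem:glueing} again for the row direction (stacking $l_1$ copies), whereas you use Corollary~\ref{cor:blowup}; both are equally valid here.
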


\section{The case $k\geq 3$.}

%	By Theorem \ref{thm:SOFRS.pairs.existence}, 
	It now suffices to prove the case $k\geq 3$ in order to prove Theorem \ref{thm:mainresult}.

	\begin{theorem} \label{thm:complete.SOFRS.mod4}
Let $k\geq 3$, $q\arrowvert m$, $q\arrowvert n$ and $q^k \arrowvert mn$.  Then there exist an array of type $I_k(m,n; q)$.
	%	Suppose $ m=q\lambda_2 $ and $ n=q\lambda_1 $, where $ n $ and $ q $ are not simultaneously congruent to 2 modulo 4 (either $ n \not \equiv q \pmod{4} $ or $ q \not \equiv 2 \pmod{4} $). 
	\end{theorem}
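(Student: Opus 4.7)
The plan is to reduce to the case where $q$ is a prime power via the Kronecker product, and then construct each prime-power case using Theorems \ref{thm:SOFRS.primepowers} and \ref{thm:SOFRS.i+j+1}.

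Write $q = p_1^{e_1}\cdots p_r^{e_r}$ and factor $m = c\prod_i p_i^{v_i}$ and $n = d\prod_i p_i^{w_i}$, where $\gcd(cd,q)=1$. The hypotheses $q\mid m$, $q\mid n$ and $q^k\mid mn$ translate into $v_i\geq e_i$, $w_i\geq e_i$ and $v_i+w_i\geq ke_i$ for each $i$. Assuming we can produce an array of type $I_k(p_i^{v_i},p_i^{w_i};p_i^{e_i})$ for every $i$, Corollary \ref{cor:kroneckerproduct.of.m.sofs} combines them into an $I_k(\prod p_i^{v_i},\prod p_i^{w_i};q)$, which Corollary \ref{cor:blowup} inflates to $I_k(m,n;q)$ via the factor $(c,d)$. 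So it suffices to work in the prime-power case.

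Take $q=p^e$ and seek $I_k(p^v,p^w;p^e)$ with $v,w\geq e$ and $v+w\geq ke$. Write $v=eM+\alpha$ and $w=eN+\beta$ with $0\leq\alpha,\beta<e$ and $M,N\geq 1$; set $b_1=p^\alpha$ and $b_2=p^\beta$, so the target is $I_k(q^Mb_1,q^Nb_2;q)$. The inequality $e(M+N)+(\alpha+\beta)\geq ke$ together with $\alpha+\beta<2e$ forces $M+N\geq k-1$. If $M+N\geq k$, Theorem \ref{thm:SOFRS.primepowers} yields $I_{M+N}(q^M,q^N;q)$: the forbidden triple $(1,1,2)$ cannot occur, since $(M,N,q)=(1,1,2)$ would force $e=1$, $\alpha=\beta=0$, hence $v+w=2<k$. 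Corollary \ref{cor:blowup} blows this up to $I_{M+N}(q^Mb_1,q^Nb_2;q)$, and discarding the last $M+N-k$ constituent frequency rectangles yields an $I_k$ of the same dimensions, since each $k$-vector then appears $q^{M+N-k}\cdot mn/q^{M+N}=mn/q^k$ times. If $M+N=k-1$, then $\alpha+\beta\geq e$, so $q=p^e$ divides $b_1b_2=p^{\alpha+\beta}$; moreover $e\geq 2$ (otherwise $\alpha=\beta=0$ gives $v+w=M+N<k=ke$, contradicting $v+w\geq ke$), so $q\neq 2$ and Theorem \ref{thm:SOFRS.primepowers} again supplies the base array $I_{M+N}(q^M,q^N;q)$. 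Theorem \ref{thm:SOFRS.i+j+1} then promotes it to $I_{M+N+1}(q^Mb_1,q^Nb_2;q)=I_k(m,n;q)$.

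The main work lies in the arithmetic bookkeeping: verifying that $M+N\geq k-1$ always holds, confirming $(M,N,q)\neq(1,1,2)$ in either subcase, and checking that these two subcases exhaust all possibilities. The one step not stated as an earlier lemma is the projection from $I_{M+N}$ to $I_k$ by discarding constituent frequency rectangles, but this is a direct marginalisation: in the decomposition $F_0\oplus\cdots\oplus F_{M+N-1}$ each $F_i$ is already a frequency rectangle, and the replication of each $k$-vector in $F_0\oplus\cdots\oplus F_{k-1}$ is obtained by summing the occurrences of its $(M+N-k)$-suffix extensions. No genuine new construction is required beyond the tools of Sections 2 and 3.
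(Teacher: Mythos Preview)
Your proof is correct and follows essentially the same strategy as the paper: reduce to prime-power $q$ via the Kronecker product (Corollary~\ref{cor:kroneckerproduct.of.m.sofs}) and blowup (Corollary~\ref{cor:blowup}), then in the prime-power case apply Theorem~\ref{thm:SOFRS.primepowers} when the base exponents already suffice and Theorem~\ref{thm:SOFRS.i+j+1} when the leftover exponent $\alpha+\beta\geq e$ supplies one extra factor, with the $(1,1,2)$ exception ruled out by $k\geq 3$ in both subcases. Your split on $M+N\geq k$ versus $M+N=k-1$ is a mild reorganization of the paper's split on $\alpha_c+\beta_c<s_c$ versus $\alpha_c+\beta_c\geq s_c$ (the paper first reduces to maximal $k$, you instead project explicitly at the end), but the arithmetic and the invoked constructions are the same.
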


	\begin{proof}
Trivially, if an array of type $I_k(m,n;q)$ exists, then an array of type $I_{\ell}(m,n;q)$ exists for each $1\leq \ell<k$. Thus we may assume that 
$k = \max \{t : q^t \arrowvert mn \} $.
		Let $ mn = q^kb$. 
%Thus we require a set of $ k $  SOFRS of type $ FR(m,n; \ \lambda_1, \lambda_2) $ to form a complete set (see Lemma \ref{lem:upperbound.SOFRS}).
		
Consider the prime factorization of $ q $: 
		\begin{equation*}
		q = p_0^{s_0}p_1^{s_1} \dots p_{l-1}^{s_{l-1}}. 
		\end{equation*}
				
		For each  $ r\in [l]$, let $ i_r = \max \{t : q_r^t \arrowvert m \}$ and $ j_r = \max \{t : q_r^t \arrowvert n \} $, where $ q_r = p_r^{s_r} $. Thus $ m $ and $ n $ can be expressed as: 
		\begin{equation} \label{eq:SOFRS.mn}
		m = q_0^{i_0} \dots q_{l-1}^{i_{l-1}} \ p_0^{\alpha_0} \dots p_{l-1}^{\alpha_{l-1}} \ a_1, \qquad   n = q_1^{j_1} \dots q_l^{j_l} \ p_0^{\beta_0} \dots p_{l-1}^{\beta_{l-1}} \ a_2 
		\end{equation}
		with $ \alpha_r, \beta_r < s_r $ and 
		$p_r \nmid  a_1$ and $p_r \nmid a_2$ for each $ r\in [l]$. 
%Note that if $ q \equiv 2 \pmod{4} $, then $ p_c^{s_c} = %2 $  for some $ c \in \{1,2, \dots , l\} $, but in that %case $ n \not \equiv 2 \pmod{4} $ implies that $ j_c %\geq 2 $. 
Now for any $ c \in [l]$ we have the following two cases:
		
		\textbf{Case I:} When $ \alpha_c + \beta_c < s_c $. 
		
		In this case, $i_c+j_c$ is the largest power of 
		$q_c$ which divides $mn$, and thus $i_c+j_c$ is the largest power of $q_c$ which divides $k$. 
		By Theorem \ref{thm:SOFRS.primepowers} and 
		%Lemma \ref{lem:SOFRS_qib1b2}, 
		Corollary \ref{cor:blowitup},
		if $(i_c,j_c,q_c)\neq (1,1,2)$, 
		there exists an array of type
		$I_k(q_c^{i_c}p_c^{\alpha_c},  q_c^{j_c} p_c^{\beta_c}; q_c)$, 
		where $k=i_c+j_c$. 
		However if $(i_c,j_c,q_c)=(1,1,2)$, then 
		$s_c=1$, $\alpha_c=\beta_c=0$ and $2^3$ does not divide $mn$, contradicting $k\geq 3$. 
		%set of $ i_c + j_c $ SOFRS of type $ %FR(q_c^{i_c}p_c^{\alpha_c}, \ q_c^{j_c} %p_c^{\beta_c}; \ q_c^{j_c-1}p_c^{\beta_c}, %q_c^{i_c-1}p_c^{\alpha_c}) $. 
		
		\textbf{Case II:} When	$ \alpha_c + \beta_c \geq s_c$. 
		
		\sloppy Since $ \alpha_c, \beta_c < s_c $, this implies $ \alpha_c + \beta_c < 2s_c $ and therefore $i_c + j_c +1$ is the largest power of 
		$q_c$ which divides $k$. By combining Theorem \ref{thm:SOFRS.primepowers} and Theorem \ref{thm:SOFRS.i+j+1} we obtain an array of type 
		$I_k(q_c^{i_c}p_c^{\alpha_c},  q_c^{j_c} p_c^{\beta_c}; q_c)$ where $k= i_c +j_c +1$. 
		
		%$ i_c +j_c +1 = k $ SOFRS of type $ %FR(q_c^{i_c}p_c^{\alpha_c}, \ q_c^{j_c} %p_c^{\beta_c}; \ q_c^{j_c-1}p_c^{\beta_c}, %q_c^{i_c-1}p_c^{\alpha_c})$. \\
		
		Thus in both cases for each $c\in [l]$ we obtain an array of type  
		$I_k(q_c^{i_c}p_c^{\alpha_c}, q_c^{j_c} p_c^{\beta_c}; q_c)$
		%\ q_c^{j_c-1}p_c^{\beta_c}, %q_c^{i_c-1}p_c^{\alpha_c}) $ 
		and by taking their Kronecker product (see Corollary \ref{cor:kroneckerproduct.of.m.sofs}), we can construct an array of type 
		$I_k(\frac{m}{a_1}, \frac{n}{a_2}; q)$
		%\frac{n}{qa_2}, \frac{m}{qa_1}) $, 
		where $ a_1 $ and $ a_2 $ are defined in equation (\ref{eq:SOFRS.mn}). Finally, by applying Corollary \ref{cor:blowup} we 
		obtain an array of type $I_k(m,n; q)$, which completes the proof. 
	\end{proof}
		
%Theorems \ref{thm:SOFRS.pairs.existence} 
The previous section and Theorem \ref{thm:complete.SOFRS.mod4} together imply Theorem \ref{thm:mainresult}.

%\bibliographystyle{abbrv}
%\bibliography{References}

\end{document}